\newtheorem{theorem}{Theorem}
\newtheorem{corollary}{Corollary}
\newtheorem{remark}{Remark}
\theoremstyle{remark}
\newcommand{\re}{\text{\rm Re}\,}
\newcommand{\im}{\text{\rm Im}\,}
\newcommand{\Hol}{\text{\rm Hol}\,}
\newcommand{\mob}{\text{\it m\"ob}\,}
\begin{document}
\title[Boundary distortion estimate]{Boundary distortion estimates for holomorphic maps}
\author[A.~Frolova, M.~Levenshtein, D.~Shoikhet, A.~Vasil'ev]{Anastasia Frolova$^{\dag}$, Marina Levenshtein,   David Shoikhet, and Alexander Vasil'ev$^{\dag}$}

\address{A.~Frolova: Department of Mathematics, University of Bergen, Johannes Brunsgate 12, Bergen 5008, Norway}
\email{Anastasia.Frolova@math.uib.no}

\address{M.~Levenshtein: Department of Mathematics, ORT Braude College,
P.O. Box 78, 21982 Karmiel, Israel}
\email{marlev@braude.ac.il}

\address{D.~Shoikhet: Department of Mathematics, ORT Braude College,
P.O. Box 78, 21982 Karmiel, Israel}
\email{davs@braude.ac.il}

\address{A.~Vasil'ev: Department of Mathematics, University of Bergen, Johannes Brunsgate 12, Bergen 5008, Norway}
\email{Alexander.Vasiliev@math.uib.no}

\thanks{All authors are supported by EU FP7 IRSES program STREVCOMS, grant  no. PIRSES-GA-2013-612669; The authors$^{\dag}$ have been  supported by the grants of the Norwegian Research Council \#204726/V30, \#213440/BG}

\subjclass[2010]{Primary 30C35, 30D05; Secondary 37C25, 30C75}

\keywords{Fixed point, semigroup of analytic functions,
Denjoy-Wolff point, reduced modulus, digon, angular derivative}

\begin{abstract}
We establish some estimates  of the  the angular derivatives from below for holomorphic self-maps of the unit disk $\mathbb D$ at one and two fixed points of the unit circle
provided there is no fixed point inside $\mathbb D$. The results complement  Cowen-Pommerenke  and Anderson-Vasil'ev type estimates in the case of univalent functions.
We use the method of extremal length and propose a new semigroup approach to deriving inequalities for holomorphic self-maps of the disk which are not necessarily univalent using known inequalities for univalent functions. This approach allowed us to receive a new Ossermans type estimate as well as inequalities for holomorphic self-maps which images do not separate the origin and the boundary.
\end{abstract}
\maketitle

\section{Introduction}

Let $\mathbb{D}=\{z\in\mathbb{C}:\,\,|z|<1\}$ be the unit disk, and let
$\mathrm{Hol}(\mathbb{D},\mathbb{D})$ stand for the family of analytic self-maps
of \thinspace$\mathbb{D}$.
The family $\mathrm{Hol}(\mathbb{D},\mathbb{D})$ forms a semigroup with
respect to the functional composition with the identity map as the unity. The study of fixed points of elements from  $\mathrm{Hol}(\mathbb{D},\mathbb{D})$   always
plays a prominent role in the theory of dynamical systems. We recall that a point $\xi\in\hat{\mathbb{D}}$ from the closure $\hat{\mathbb{D}}$ of $\mathbb D$
is said to be a {\it fixed point} of an element $\varphi\in\mathrm{Hol}
(\mathbb{D},\mathbb{D})$ if $\lim_{r\rightarrow1^-}\varphi(r\xi)=\xi.$
In particular, if $\xi\in\mathbb T:=\partial\mathbb{D}$, then the above definition is equivalent to the assertion
that the angular limit $\angle\lim\limits_{z\rightarrow\xi}\varphi(z)=\xi$ exists, i.~e.,
 $\lim\limits_{z\rightarrow\xi,\,\,z\in\Delta_{\xi}}%
\varphi(z)=\xi$ for any Stolz angle $\Delta_{\xi}$ centered at $\xi$, see,~e.~g., \cite[Corollary 2.17, page 35]{Pomm1}. Such
points $\xi\in\mathbb T$ are usually called {\it boundary fixed points} of
$\varphi$.  Recall that the angular limit $\varphi(\xi)$ exists for almost all $\xi \in \mathbb T$, moreover, the exceptional set in $\mathbb T$ is of capacity zero. The classification of the fixed points of $\varphi\in\mathrm{Hol}(\mathbb{D},\mathbb{D})$ can be performed  regarding the value of the derivative $|\varphi'(\xi)|$ in the case $\xi\in \mathbb D$, or
the value of the {\it angular derivative}
\[
\varphi^{\prime}(\xi):=\angle\lim\limits_{z\rightarrow\xi}\frac{\varphi
(z)-\xi}{z-\xi}.
\]
in the case $\xi\in \mathbb T$, which is real and $\varphi^{\prime}(\xi)\in(0,+\infty)\cup\{\infty\}$ in this case, that follows from the {\it Julia-Wolff lemma}, see,~e.~g., \cite[Proposition 4.13, page 82]{Pomm1}. We recall that the angular derivative at a boundary point $\xi$ exists if and only if the analytic function $\varphi'(z)$ has the angular limit $\angle\lim\limits_{z\to\xi}\varphi'(z)$, see,~e.~g., \cite[Proposition 4.7, page 79]{Pomm1}. Whenever
$\varphi^{\prime}(\xi)\neq\infty\,,$ for a boundary fixed point $\xi$, we say that $\xi$ is a regular (boundary)
fixed point. The regular fixed points can be {\it attractive} if
$\varphi^{\prime}(\xi)\in(0,1),$ {\it neutral} if $\varphi^{\prime}(\xi)=1$ or
{\it repulsive} if $\varphi^{\prime}(\xi)\in(1,+\infty).$ Fixed points $\xi
\in\mathbb{D}$ with $\,\,|\varphi^{\prime}(\xi)|<1$ are also called attractive.

The existence of the angular derivative is a difficult problem in general, however the Julia-Wolff theory implies that in the case of $\varphi\in\mathrm{Hol}(\mathbb{D},\mathbb{D})$, the angular
derivative $\varphi'(\xi)$ exists (but perhaps infinite) at all points $\xi\in \mathbb T$, where the angular limit $\varphi(\xi)$ exists and $|\varphi(\xi)|=1$.
Furthermore, the mapping at the point $\xi$ may be conformal ($0<|\varphi'(\xi)|<\infty$) or twisting. The McMillan twist theorem,~see \cite[page 127]{Pomm1}, states that $\varphi$ is conformal for almost all such points.
A classical result by Denjoy and Wolff \cite{Denjoy, Wolff}, states that for a holomorphic self-map
$\varphi$ of the unit disk $\mathbb{D}$ different from a (hyperbolic) rotation,
there exists a unique fixed point $\tau\in\hat{\mathbb{D}}$ such that the
sequence of the iterates $(\varphi_{n}(z))$,  defined by
$\varphi_0(z) = z, \varphi_n(z) = \varphi (\varphi_{n-1}(z)), n = 1, 2, \dots$, converges locally uniformly on
$\mathbb{D}$ to $\tau$ as $n\rightarrow\infty$. This point $\tau$ is called the
{\it Denjoy-Wolff point} of $\varphi$ and it is the only fixed
point of $\varphi$ satisfying $\varphi^{\prime}(\tau)\in\mathbb{D}$. The  point $\tau$ is the only attractive fixed point of $\varphi$ in the above
multiplier sense.

The Julia-Carath\'eodory Theorem \cite{Cara} and the Wolff Lemma \cite{Wolff2}  imply that if $\varphi$ has no interior fixed point, then there exists a boundary fixed point $\xi$ such that the angular derivative $\varphi'(\xi)$ exists and $\varphi'(\xi)\in(0,1]$. The mapping  $\varphi$ is said to be of {\it parabolic type} if $\varphi'(\xi)=1$, and of {\it hyperbolic type} if $\varphi'(\xi)\in(0,1)$. Otherwise, the mapping $\varphi$ has an interior fixed
point $\tau \in \mathbb D$, and for each boundary fixed point $\xi\in \mathbb{T}$, $\varphi'(\xi)>1$.
Quantification of the latter statement for the case $\tau=0$ was first given by
Unkelbash \cite{Unkel},  and rediscovered by  Osserman in \cite{Osserman} 60 years later. They proved that if $\varphi$ has a regular boundary fixed point at $1$, and $\varphi(0)=0$, then
\[
\varphi'(1)\geq \frac{2}{1+|\varphi'(0)|}.
\]
In the study of the case of several fixed boundary points, a real breakthrough was made by Cowen and Pommerenke \cite{CowenPommerenke}. Summarising  their results
and adding recent progress by Elin, Shoikhet, Tarkhanov, and  Bolotnikov \cite{Bolotnikov, Elin} we formulate the following theorem

\medskip
\noindent
{\bf Theorem A.}
{\it Let $\varphi \in \mathrm{Hol}(\mathbb{D},\mathbb{D})$, and let $\tau$ be the Denjoy-Wolff point of $\varphi$ and $\xi_1,\dots,\xi_n$ be other possible distinct fixed points
of $\varphi$ in ${\mathbb T}$.
\begin{itemize}
\item If $\tau=0$, then
\begin{equation}\label{eq1}
\sum\limits_{j=1}^n\frac{1}{\varphi'(\xi_j)-1}\leq\re\frac{1+\varphi'(0)}{1- \varphi'(0)};
\end{equation}
\item If $\tau=1$  and $\varphi'(1)\in (0,1)$ (hyperbolic attractor), then
\begin{equation}\label{eq2}
\sum\limits_{j=1}^n\frac{1}{\varphi'(\xi_j)-1}\leq\frac{\varphi'(1)}{1- \varphi'(1)};
\end{equation}
\item If $\tau=1$ (including $\varphi'(1)=1$, parabolic attractor), then
\begin{equation}\label{eq3}
\sum\limits_{j=1}^n\frac{|1-\xi_j|^2}{\varphi'(\xi_j)-1}\leq 2\re \left(\frac{1}{\varphi(0)}-1\right);
\end{equation}
\end{itemize}
All estimates are sharp and the extremal functions satisfy some functional equations provided in \cite{Bolotnikov, CowenPommerenke, Elin}.
}
\medskip

In the case of univalent functions, the following theorem holds \cite{CowenPommerenke}.

\medskip
\noindent
{\bf Theorem B.}
{\it Let $\varphi\in\mathrm{Hol}(\mathbb{D},\mathbb{D})$ be
univalent with an attractive Denjoy-Wolff point $\tau\in\mathbb{T}$, and let
$\xi_{1},\dots,\xi_{n}$ be $n\,\ $different repulsive boundary fixed points of
$\varphi.$ Then,%
\[
\sum\limits_{k=1}^{n}\frac{1}{\log\varphi^{\prime}(\xi_{k})}\leq-\frac{1}%
{\log\varphi^{\prime}(\tau)}.
\]
Moreover, this inequality is sharp.}

\medskip
A weighted version of this theorem was proved by Contreras, D{\'\i}az-Madrigal and Vasil'ev in \cite{CDV}.

For two boundary fixed points $\xi_1$ and $\xi_2$, without loss of generality,  applying rotation, we assume that  $\xi_1=e^{-i\theta}$,  $\xi_2=e^{i\theta}$, and $\theta\in (0,\frac{\pi}{2}]$. The following result is a consequence  of \cite[Theorem 3.1]{CowenPommerenke}.

\medskip
\noindent
{\bf Theorem C.}
{\it Let $\varphi\in\mathrm{Hol}(\mathbb{D},\mathbb{D})$ and let $\xi_1=e^{-i\theta}$ and $\xi_2=e^{i\theta}$ be fixed.  Then,
\begin{equation}\label{CP31}
\varphi'(e^{i\theta})\varphi'(e^{-i\theta})\geq \sup\limits_{z\in\gamma}\left(1+\frac{4\im \varphi(z)}{(1-|\varphi(z)|^2)^2}\right),
\end{equation}
where $\gamma$ is the hyperbolic geodesic joining $\xi_1$ and $\xi_2$.
}

\medskip

In this paper we are aimed at a sharp analogue of inequality \eqref{eq3}  in Theorem~A and  \eqref{CP31}  in Theorem~C  for univalent self-maps of the unit disk in the case of two fixed boundary points. The result is presented in Section \ref{sec:two}. 
The method of the proof is based on the notion of digon and extremal partitions of a domain, which was already successfully applied for this type of problems with
angular derivatives, see  \cite{AV, CDV, PomVas, VasBook}.

In this paper we also present an approach which allows one to obtain estimates for  functions from the general class $\Hol(\mathbb D,\mathbb D)$ by means of estimates for univalent functions. The approach is based on the theory of semigroups of analytic functions.

In Section \ref{sec:one} we combine both methods presented in this paper. First we reprove a known sharp estimate
\begin{equation}
\label{CP}
\varphi'(1)\geq  \re\frac{1-\varphi(0)}{1+\varphi(0)}
\end{equation}
for univalent functions with the fixed boundary point at $1$ using moduli and extremal partitions. By doing this we demonstrate briefly the essence of the method we use to prove the result for function with two boundary fixed points. Then we use a semigroup approach to extend the inequality (\ref{CP}) to  functions from $\mathrm{Hol}(\mathbb{D},\mathbb{D})$.

In general, using this technique one can start with an estimate for univalent functions and arrive at different type of inequality for general analytic function.
For example, one can obtain an Osserman's estimate for functions in $\mathrm{Hol}(\mathbb{D},\mathbb{D})$ from Anderson-Vasil'ev inequality for univalent functions. This and other examples of transformation
of inequalities is presented in Section \ref{sec:one}.

\section{Preliminaries}
\label{sec:pre}
The definitions given here are specified for one digon in a domain in $\hat{\mathbb C}$. For general formulations in the case of admissible families of digons
and extremal partitions of Riemann surfaces for weighted sums of the moduli, see \cite{Em3, Kuz4, Sol7, VasBook}.

\subsection{Reduced modulus of digon}

Let $D$ be a hyperbolic simply connected domain in $\mathbb C$
with two finite fixed boundary points  $a$, $b$ (maybe with the
same support) on its piecewise smooth boundary. It is called a
digon. Denote by $S(a,\varepsilon)$ a region that is the
connected component of $D\cap\{|z-a|<\varepsilon\}$ with the point
$a$ in its border. Denote by $D_{\varepsilon}$ the domain
$D\setminus \{S(a,\varepsilon_1)\cup S(b,\varepsilon_2)\}$ for
sufficiently small $\varepsilon_{1,2}$ such that there is a curve in $D_{\varepsilon}$
connecting the opposite sides on $S(a,\varepsilon_1)$ and
$S(b,\varepsilon_2)$. Let $M(D_{\varepsilon})$ be the modulus of
the family of paths in $D_{\varepsilon}$ that connect the boundary
arcs of $S(a,\varepsilon_1)$ and $S(b,\varepsilon_2)$ when lie in
the circumferences $|z-a|=\varepsilon_1$ and $|z-b|=\varepsilon_2$
(we choose a single arc in each circle so that both arcs can be
connected in $D_{\varepsilon}$). If the limit
\begin{equation}
m(D,a,b)=\lim\limits_{\varepsilon_{1,2}\to 0}\left(
\frac{1}{M(D_{\varepsilon})}+
\frac{1}{\varphi_a}\log\,\varepsilon_1+\frac{1}{\varphi_b}\log\,\varepsilon_2\right),\label{eq:moddig}
\end{equation}
exists, where $\varphi_a=\sup\,\Delta_a$ and
$\varphi_b=\sup\Delta_b$ are the inner angles, where $\Delta_{a}$ and $\Delta_{b}$
are the Stolz angles inscribed in $D$ at $a$ and $b$ respectively,
then $m(D,a,b)$ is called the  reduced modulus of the digon $D$.
Various conditions guarantee the existence of this modulus,
whereas even in the case of a piecewise analytic boundary there are
examples \cite{Sol7} which show that this is not always the case.
The existence of  limit (\ref{eq:moddig}) is a local
characteristic of the domain $D$ (see \cite{Sol7}, Theorem 1.2).
If the domain $D$ is conformal (see the definition in \cite[page 80]{Pomm1}) at the points $a$ and $b$, then (\cite{Sol7}, Theorem
1.3)   limit (\ref{eq:moddig}) exists. More generally, suppose
that there is a conformal map $f(z)$ of the domain
$S(a,\varepsilon_1)\subset D$ onto a circular sector, so that the
angular limit $f(a)$ exists which is thought of as a vertex of
this sector of angle $\varphi_a$. If the function $f$ has the
finite non-zero angular derivative $f'(a)$ we say that the domain
$D$ is also conformal at the point $a$ (compare \cite[page 80]{Pomm1}). If the digon $D$ is conformal at the points $a$ and $b$
then  the limit (\ref{eq:moddig}) exists (\cite{Sol7}, Theorem
1.3). It is noteworthy that Jenkins and Oikawa \cite{JenOik}
in 1977 applied extremal length techniques to study the behaviour
of a regular univalent map at a boundary point. Necessary and
sufficient conditions were given for the existence of a finite
non-zero angular derivative. Independently a similar result was
 obtained by Rodin and Warschawski \cite{Rodin}.

The reduced modulus of a digon is not invariant under conformal mappings.
The following result gives a change-of-variable formula, see, e.g., \cite{VasBook}.  Let a digon $D$ with the vertices at $a$ and $b$ be so that 
limit (\ref{eq:moddig}) exists and the Stolz angles are
$\varphi_a$ and $\varphi_b$ . Suppose that there is a conformal
map $f(z)$ of the digon $D$ (which is conformal at $a$ and $b$) onto
a digon $D'$, so that there exist the angular limits $f(a)$ and
$f(b)$ with the inner angles $\psi_a$ and $\psi_b$ at the vertices
$f(a)$ and $f(b)$ which we also understand as the supremum over
all Stolz angles inscribed in $D'$ with the vertices at $f(a)$ or
$f(b)$, respectively. If the function $f$ has the  finite non-zero
angular derivatives $f'(a)$ and $f'(b)$, then
$\varphi_a=\psi_{a}$, $\varphi_b=\psi_{b}$, and the reduced
modulus (\ref{eq:moddig}) of $D'$ exists and changes
\cite{Em3, Kuz4, Sol7, VasBook} according to the rule
\begin{equation}
m(f(D),f(a),f(b))= m(D,a,b)+ \frac{1}{\psi_a}\log
|f'(a)|+\frac{1}{\psi_b}\log |f'(b)|.\label{eq:modchange1}
\end{equation}

If we suppose, moreover, that $f$ has the expansion
$$
f(z)=w_1+(z-a)^{\psi_a/\varphi_a}(c_1+c_2(z-a)+\dots)
$$
in a neighborhood of the point $a$, and the expansion
$$
f(z)=w_2+(z-b)^{\psi_b/\varphi_b}(d_1+d_2(z-b)+\dots)
$$
in a neighborhood of the point $b$, then
the reduced modulus of  $D$
changes according to the rule
\begin{equation}
m(f(D),f(a),f(b))= m(D,a,b)+
\frac{1}{\psi_a}\log |c_1|+\frac{1}{\psi_b}\log |d_1|.\label{eq:modchange2}
\end{equation}
Obviously, one can extend this definition to the case of vertices
with infinite support.

\subsection{Extremal partition by digons}

Let $\Omega$ be a hyperbolic domain in $\hat{\mathbb C}$ that has a finite number of hyperbolic and parabolic boundary components.
We consider a family $\mathcal F$ of digons $D$ in $\Omega$ with two fixed vertices $a$ and $b$ on $\partial \Omega$, such that any arc connecting
the vertices of $D\in \mathcal F$ is not homotopic to a point of $\Omega$. The boundary points are understood in the Carath\'eodory sense.

We require the digons from $\mathcal F$ to be
conformal at their vertices.
A general theorem, see \cite{Em3, Kuz4, Sol7, VasBook}, implies that
any
collection of  admissible digons $\mathcal F$  satisfies the
inequality
$m(D,a,b)\geq m(D^*,a,b)$,
with the equality sign only for $D=D^*$. Here
 $D\sp*$ is  a strip domain in the trajectory structure
of a unique quadratic differential $Q(\zeta) d\zeta\sp2$, and
there is a conformal map $g(\zeta)$,
$\zeta\in D\sp*$ that satisfies the differential equation
\begin{equation}\label{eq_g}
 \left( \frac{g'(\zeta)}{g(\zeta)} \right)\sp2=
4\pi\sp2 Q(\zeta),
\end{equation}
 and which maps
$D\sp*$ onto the strip $\mathbb C\setminus [0,\infty)$.
The critical trajectories of $Q (\zeta) d\zeta\sp2$ define in
$\Omega$ a  strip domain $D^*$ associated  with $\mathcal F$.

\subsection{Semigroups of analytic functions}

Let us recall that a \textit{(one-parameter) semigroup of analytic functions}
is any continuous homomorphism $\Phi:t\mapsto\Phi(t)=\varphi_{t}$ from the
additive semigroup of non-negative real numbers into the composition semigroup
of all analytic functions which map $\mathbb{D}$ into $\mathbb{D}$. That is,
$\Phi$ satisfies the following three conditions:

\begin{enumerate}
\item[a)] $\varphi_{0}$ is the identity in $\mathbb{D},$

\item[b)] $\varphi_{t+s}=\varphi_{t}\circ\varphi_{s},$ for all $t,s\geq0,$

\item[c)] $\varphi_{t}(z)$ tends to $z$ locally uniformly in $\mathbb{D}$ as
$t\rightarrow0$.
\end{enumerate}

It is well-known that the functions $\varphi_{t}$ are always univalent. If $a$
is the Denjoy-Wolff point of one of the functions $\varphi_{t_{0}}$, for some
$t_{0}>0,$ then $a$ is the Denjoy-Wolff point of all the functions of the
semigroups, that is, all  functions of a semigroup share the Denjoy-Wolff
point. Moreover, if a point $\xi\in\partial\mathbb{D}$ is a boundary fixed
point of $\varphi_{t_{0}}$ for some $t_{0}>0,$ then it is a boundary fixed
point of all $\varphi_{t}$ \cite{ContrDiazPom}.

Given a semigroup $(\varphi_t)_{t\geq 0}$, it is well-known (see \cite{BerkPorta, Shoikhet}) that there exists a unique analytic function $g\colon \mathbb D\to\mathbb C$ such that
\[
\frac{d\varphi_t}{dt}=g(\varphi_t),
\]
for all $z\in \mathbb D$ and $t\geq 0$, called the {\it infinitesimal generator} of the semigroup  $(\varphi_t)_{t\geq 0}$. The Berkson-Porta representation \cite{BerkPorta} assures
that an analytic function $g\colon \mathbb D\to\mathbb C$  is the infinitesimal generator of a semigroup of analytic functions $(\varphi_t)_{t\geq 0}$ if and only if there exists
a point $w\in\hat{\mathbb D}$ and an analytic function $p\colon \mathbb D\to\mathbb C$ with $\re p(z)>0$ in $\mathbb D$, such that
\[
g(z)=(w-z)(1-\bar{w}z)p(z),\quad z\in\mathbb D.
\]
Such a representation is unique. If $(\varphi_t)$ is not the trivial group of the identity maps, then $w$ is either the DenjoyÐWolff point of the semigroup in the case where  $(\varphi_t)_{t\geq 0}$ is not a group of hyperbolic rotations, or the unique interior fixed point, otherwise.

\section{One fixed boundary point}
\label{sec:one}
\subsection{Cowen-Pommerenke type inequality}
In this section we will prove a known theorem by Cowen and Pommerenke
showing, in particular, a method how one can obtain estimates for general functions from $\Hol(\mathbb D,\mathbb D)$ by means of estimates for univalent functions.  At the same time 
we will show an application of the reduced moduli of digons and  extremal partitions
infinitesimal generators for semigroups.

\begin{theorem}\cite[Theorem 8.1]{CowenPommerenke}
If $\varphi \in \mathrm{Hol}(\mathbb{D},\mathbb{D})$, and $\varphi(1)=1$, then the sharp estimate
\begin{equation}\label{CPe}
\varphi'(1)\geq \frac{1}{\re\frac{1+\varphi(0)}{1-\varphi(0)}},
\end{equation}
holds with the equality sign for the M\"obius map
\[
m(z)=\frac{1+\bar{a}}{1+a}\frac{z+a}{1+z\bar{a}}, \quad a\in\mathbb D,\quad a\frac{1+\bar{a}}{1+a}=\varphi(0).
\]
\end{theorem}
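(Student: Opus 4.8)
The plan is to prove inequality \eqref{CPe} in two stages, first for univalent $\varphi$ using extremal length, and then to remove the univalence hypothesis by a semigroup embedding. For the univalent case, I would first normalise by a M\"obius automorphism of $\mathbb D$: since any univalent self-map of $\mathbb D$ fixing $1$ and with $\varphi(0)$ prescribed can be pre- or post-composed with automorphisms, one reduces to understanding the digon with vertices at the fixed point $1$ and at a point chosen so that the reduced-modulus machinery applies. Concretely, consider the domain $\Omega=\varphi(\mathbb D)$, regarded as a digon $D^{\prime}$ with both vertices at the boundary point $1$ (a ``degenerate'' digon, i.e. one whose two vertices share support). On the source side, $\mathbb D$ is a digon with both vertices at $1$, and the identity map is not the competitor — rather, the relevant configuration is that the geodesic (or appropriate separating arc) through $\varphi(0)$ splits $\mathbb D$, and one compares the reduced modulus of the image digon to that of a standard one. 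The inner angle at the vertex $1$ is $\pi$ on the source side; on the image side, because $\varphi$ is conformal at $1$ (angular derivative $\varphi^{\prime}(1)$ finite nonzero), the angle is also $\pi$. Applying the change-of-variable formula \eqref{eq:modchange1} with $\psi_a=\psi_b=\pi$ gives a term $\tfrac{1}{\pi}\log|\varphi^{\prime}(1)|$ (counted once or twice depending on how the degenerate vertices are bookkept), and the extremal-partition inequality $m(D,a,b)\ge m(D^{*},a,b)$ forces $\log\varphi^{\prime}(1)\ge -\log\bigl(\re\tfrac{1+\varphi(0)}{1-\varphi(0)}\bigr)$, which is \eqref{CPe}. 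The extremal digon is the one whose defining quadratic differential is realised by the M\"obius map $m$ in the statement, giving sharpness.

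For the general (non-univalent) case, the idea is to embed $\varphi$ into a one-parameter semigroup or, more precisely, to use the infinitesimal-generator correspondence in reverse. Given $\varphi\in\mathrm{Hol}(\mathbb D,\mathbb D)$ with $\varphi(1)=1$, I would consider the Berkson--Porta data: the relevant fact is that $\varphi$ itself need not lie in a semigroup, but one can instead compare $\varphi$ with the univalent maps $\varphi_t$ of an auxiliary semigroup whose generator encodes the same boundary behaviour at $1$ and the same value at $0$. The cleaner route, and the one I expect the authors take, is: write $\varphi$ via its generator indirectly by noting that $h(z)=\tfrac{1+z}{1-z}$ conjugates $\mathbb D$ to the right half-plane, set $F=h\circ\varphi\circ h^{-1}$, and observe that the quantity $\re\tfrac{1+\varphi(0)}{1-\varphi(0)}=\re F(1)$ (value at the image of $0$) while $\varphi^{\prime}(1)$ becomes the angular derivative of $F$ at $\infty$. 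Then $F$ maps the right half-plane into itself fixing $\infty$, and one applies a Julia-type/Herglotz representation: $F(w)=\varphi^{\prime}(1)^{-1}\,w + \text{(function with nonnegative real part)}$, from which evaluating real parts at $w=h(0)=1$ yields $\re F(1)\ge \varphi^{\prime}(1)^{-1}\cdot 1$, i.e. exactly \eqref{CPe}. This argument uses only the Julia--Wolff--Carath\'eodory inequality and does not require univalence, so it simultaneously reproves the univalent case; the role of the extremal-length argument above is to exhibit the method and to identify the extremal M\"obius map transparently.

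The main obstacle I anticipate is the careful treatment of the degenerate digon — the one with both vertices supported at the single boundary point $1$ — and the correct accounting of inner angles and of how many times the factor $\log\varphi^{\prime}(1)$ enters the change-of-variable formula \eqref{eq:modchange1}. One must verify that $\varphi$ (in the univalent case) is conformal at $1$ in the sense defined in the Preliminaries, so that \eqref{eq:modchange1} is applicable with $\psi_a=\varphi_a$, and one must identify the extremal configuration $D^{*}$ precisely as the image under $m$ of a standard strip domain, checking that $m$ indeed produces equality in $m(D,a,b)\ge m(D^{*},a,b)$. A secondary subtlety is matching the two approaches: confirming that the semigroup/Herglotz half-plane computation yields the same sharp constant and the same extremal map $m(z)=\tfrac{1+\bar a}{1+a}\cdot\tfrac{z+a}{1+z\bar a}$, with $a$ determined by $a\,\tfrac{1+\bar a}{1+a}=\varphi(0)$; this amounts to checking that this M\"obius map has $m(1)=1$, $m^{\prime}(1)=\bigl(\re\tfrac{1+\varphi(0)}{1-\varphi(0)}\bigr)^{-1}$, which is a direct but slightly delicate computation with the automorphism parametrised by $a$.
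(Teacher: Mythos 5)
Your proposal is correct in substance, but the route you take for the non-univalent case is genuinely different from the paper's. For univalent $\varphi$ your sketch matches the paper: one compares reduced moduli of degenerate digons with both vertices over $1$, normalizes by the M\"obius map $\sigma$ sending $\varphi(0)$ to $0$, and invokes the change-of-variable formula together with the extremal-partition inequality. One concrete correction: the paper's extremal configuration is $\mathbb D\setminus[0,1)$, a digon with \emph{two} vertices $1^{+},1^{-}$ each of inner angle $\pi/2$ (not $\pi$), so the derivative enters as $\tfrac{4}{\pi}\log|\varphi'(1)|$; you flagged exactly this bookkeeping as your anticipated obstacle, and it is where the factor must be pinned down. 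For general $\varphi\in\mathrm{Hol}(\mathbb D,\mathbb D)$, your argument --- conjugate by $h(z)=\tfrac{1+z}{1-z}$ to the right half-plane and apply Julia's inequality $\re F(w)\ge \varphi'(1)^{-1}\re w$ at $w=h(0)=1$ --- is a correct, complete and sharp proof of \eqref{CPe} (this is essentially the classical Cowen--Pommerenke argument), and your verification that $m(1)=1$, $m(0)=a\tfrac{1+\bar a}{1+a}$, $m'(1)=\tfrac{1-|a|^2}{|1+a|^2}=\bigl(\re\tfrac{1+\varphi(0)}{1-\varphi(0)}\bigr)^{-1}$ settles sharpness. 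The paper instead differentiates the univalent inequality along a one-parameter semigroup at $t=0^{+}$ to obtain the generator inequality $-g'(1)\le 2\re g(0)$ and then substitutes the Berkson--Porta generator $g(z)=(w-z)(1-z\bar w)\tfrac{1-\phi(z)}{1+\phi(z)}$; note that this semigroup transfer only yields the \emph{weaker} Corollary $\phi'(1)\ge\re\tfrac{1-\phi(0)}{1+\phi(0)}$ for non-univalent $\phi$, the full \eqref{CPe} in the general case being cited from Cowen--Pommerenke. So your Julia-lemma route actually buys more (the sharp constant for all holomorphic self-maps, with a shorter argument), while the paper's route is chosen to showcase the semigroup mechanism for transferring univalent estimates to general ones.
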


\begin{corollary}
If $\varphi \in \mathrm{Hol}(\mathbb{D},\mathbb{D})$, and $\varphi(1)=1$, then the sharp estimate
\begin{equation}\label{ineqGuniv}
\varphi'(1)\geq \re\frac{1-\varphi(0)}{1+\varphi(0)},
\end{equation}
holds with the equality sign only for real values of $\varphi(0)$ and for the M\"obius map
\[
m(z)=\frac{z+\varphi(0)}{1+z{\varphi}(0)}.
\]
\end{corollary}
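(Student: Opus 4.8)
The plan is to deduce the Corollary from the Theorem stated just above (which is the stronger statement) together with one elementary inequality about complex numbers. Write $w=\varphi(0)\in\mathbb D$ and set $\zeta=\frac{1+w}{1-w}$; since $|w|<1$ we have $\re\zeta>0$, and $\frac{1-w}{1+w}=1/\zeta$. So the whole matter reduces to the observation that for every $\zeta$ with $\re\zeta>0$,
\[
\re\frac1\zeta=\frac{\re\zeta}{|\zeta|^2}\le\frac{1}{\re\zeta},
\]
because $|\zeta|^2=(\re\zeta)^2+(\im\zeta)^2\ge(\re\zeta)^2$. First I would record this, and then chain it with \eqref{CPe}:
\[
\varphi'(1)\ \ge\ \frac{1}{\re\frac{1+\varphi(0)}{1-\varphi(0)}}\ \ge\ \re\frac{1-\varphi(0)}{1+\varphi(0)},
\]
which is exactly \eqref{ineqGuniv}.

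For the equality case I would argue that equality in \eqref{ineqGuniv} forces equality in each of the two displayed inequalities. Equality in the elementary inequality holds if and only if $\im\zeta=0$, i.e. $\frac{1+\varphi(0)}{1-\varphi(0)}\in\mathbb R$, which for $\varphi(0)\in\mathbb D$ means $\varphi(0)\in(-1,1)$. Equality in the first inequality is equality in \eqref{CPe}, so by the sharpness (uniqueness of the extremal) part of the Theorem, $\varphi$ must be the Möbius map displayed there; specializing $a=\varphi(0)$ to a real value makes the prefactor $\frac{1+\bar a}{1+a}$ equal to $1$, and that map collapses to $m(z)=\frac{z+\varphi(0)}{1+z\varphi(0)}$. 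Conversely, for this $m$ with $\varphi(0)=c\in(-1,1)$ a direct computation gives $m(1)=1$ and $m'(1)=\frac{1-c^2}{(1+c)^2}=\frac{1-c}{1+c}=\re\frac{1-\varphi(0)}{1+\varphi(0)}$, so equality is attained; this also verifies the claimed sharpness.

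I do not expect any serious obstacle here. The only point deserving care is the "only for real $\varphi(0)$" assertion, which hinges on the strict inequality $|\zeta|^2>(\re\zeta)^2$ whenever $\zeta\notin\mathbb R$, together with invoking the uniqueness of the Cowen–Pommerenke extremal in the Theorem rather than merely its existence.
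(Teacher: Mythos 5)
Your argument is correct, and the reduction is clean: with $\zeta=\frac{1+\varphi(0)}{1-\varphi(0)}$ in the right half-plane, the inequality $\re\frac1\zeta=\frac{\re\zeta}{|\zeta|^2}\le\frac1{\re\zeta}$ chained with \eqref{CPe} gives \eqref{ineqGuniv}, and your equality analysis (realness of $\varphi(0)$ from $|\zeta|^2=(\re\zeta)^2$, then the Cowen--Pommerenke extremal with real parameter collapsing to $m(z)=\frac{z+\varphi(0)}{1+z\varphi(0)}$) is sound. However, your route is genuinely different from the paper's. The paper does not derive the Corollary from the stated Theorem at all; it gives an independent two-stage proof whose purpose is expository: first it establishes the chain \eqref{inegGuniv2} for \emph{univalent} self-maps by comparing reduced moduli of digons under an extremal partition of $\mathbb D\setminus\{0\}$, and then it transfers the univalent estimate to arbitrary $\phi\in\Hol(\mathbb D,\mathbb D)$ by building the Berkson--Porta generator $g(z)=(w-z)(1-z\bar w)\frac{\xi-\phi(z)}{\xi+\phi(z)}$, passing to the limit $t\to0^+$ along the (univalent) semigroup it generates, and choosing $w=-1$. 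Your elementary inequality is in fact the second inequality of \eqref{inegGuniv2} in disguise, so the ``weakening'' step is common to both proofs; what you buy is brevity and a genuine derivation of the Corollary \emph{from} the Theorem, while what the paper buys is a self-contained illustration of the moduli method and of the semigroup transfer principle that it reuses later (e.g.\ for the Osserman-type bound). One caveat you should make explicit: the Theorem as stated in the paper only asserts that equality \emph{is attained} for the displayed M\"obius map, not that it is attained \emph{only} there; your equality case needs the uniqueness of that extremal, which is part of Cowen--Pommerenke's Theorem 8.1 (and is what the paper's univalent-case argument secures via uniqueness of the extremal configuration), so you are importing slightly more than the literal statement of the Theorem. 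You might also note, for completeness, that for real $\varphi(0)=c$ the parameter $a$ with $a\frac{1+\bar a}{1+a}=c$ is necessarily $a=c$ (the map $a\mapsto a\,e^{-2i\arg(1+a)}$ sends no non-real $a$ to a real value in $(-1,1)$), which justifies the collapse of the prefactor to $1$.
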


\begin{proof} We start with the univalent case.
Let us consider the family $\mathcal F_0$ of digons $D_0$ in $\Omega=\mathbb D\setminus \{0\}$ with two vertices $1^+$ and $1^-$  over the same point $1$  and the equal angles $\pi/2$ at these vertices,  such that
any arc connecting $1^+$ and $1^-$ in $D_0$ starting at one of the vertices comes to the other in $\Omega$ making the round about the origin. Then
\begin{equation}\label{P1}
\min\limits_{D_0\in \mathcal F} m(D_0,1^+,1^-)= m(D_0^*,1^+,1^-)=0,
\end{equation}
where $D^*_0=\mathbb D\setminus [0,1)$, which is a strip domain in the trajectory structure of the quadratic differential
\[
Q_0(z)dz^2=\frac{1}{(z-1)^2z}dz^2,\quad z\in \Omega.
\]

Let $\varphi \in \mathrm{Hol}(\mathbb{D},\mathbb{D})$ be an arbitrary univalent map, $\varphi(\xi)=\xi$, $\xi\in\partial\mathbb{D}$. Denote by $D_{\xi}^*$ the unit disk with a slit along the closed line segment connecting the origin and the point $\xi$. We regard $D_{\xi}^*$ as a digon with vertices at $\xi$. It can be obtained from the domain $D^*_0$ by rotation. The rotation transform does not change the reduced modulus of a digon. Therefore, the reduced modulus $m(D_{\xi}^*,\xi^+,\xi^-)=0$.

We observe that $\sigma\circ\varphi(D_{\xi}^*)$ is an admissible domain in the problem
of extremal partition \eqref{P1}, where

\[
\sigma(z)=\frac{1-\bar{\varphi}(0)\xi}{\xi-\varphi(0)}\frac{z-\varphi(0)}{1-\bar{\varphi}(0)z},\quad |\sigma'(\xi)|=\frac{1-|\varphi(0)|^2}{|1-{\varphi}(0)\bar{\xi}|^2}.
\]
This and  relation \eqref{eq:modchange1} imply that
\[
\frac{4}{\pi}\log |\varphi'(\xi)|+\frac{4}{\pi}\log \frac{1-|\varphi(0)|^2}{|1-{\varphi}(0)\bar{\xi}|^2}\geq 0,
\]
or
\begin{equation}
\label{inegGuniv2}
|\varphi'(\xi)|\geq \frac{|1-{\varphi}(0)\bar{\xi}|^2}{1-|\varphi(0)|^2}\geq \re\frac{1-\varphi(0)\bar{\xi}}{1+\varphi(0)\bar{\xi}}=\re\frac{\xi-\varphi(0)}{\xi+\varphi(0)}.
\end{equation}

When $\xi=1$, the first inequality is equivalent to \eqref{CPe} and the last one is equivalent to (\ref{ineqGuniv}).
The uniqueness of the extremal function follows from the uniqueness of the extremal configuration.

\begin{remark} We remark that
\[
 \re\frac{1-\varphi(0)}{1+\varphi(0)}\geq \frac{1-|\varphi(0)|}{1+|\varphi(0)|}.
 \]
\end{remark}

Inequality (\ref{ineqGuniv}) can be obtained with use of the theory of semigroups of holomorphic self-mappings of the unit disk.

Let $g$ be a generator of a one parameter semigroup $S=\{\varphi_t\}_{t\ge0}$ having a boundary fixed point $\xi=\varphi_t(\xi)\in\partial\mathbb{D}.$
If $g'(\xi)$ is finite, then it is a real number and $\varphi'_t(\xi)=e^{t g'(\xi)}$.
Since $\varphi_t$ univalent for each $t\geq 0$, one can write by (\ref{inegGuniv2})
\[
e^{t g'(\xi)}\ge\re\frac{\xi-\varphi_t(0)}{\xi+\varphi_t(0)}.
\]
We calculate
\begin{align*}
\frac{1-e^{tg'(\xi)}}{t}\le \frac{1-\re\frac{\xi-\varphi_t(0)}{\xi+\varphi_t(0)}}{t}=\frac{\re\frac{\xi+\varphi_t(0)-\xi+\varphi_t(0)}{\xi+\varphi_t(0)}}{t}=
\frac{\re\frac{2\varphi_t(0)}{\xi+\varphi_t(0)}}{t}=2\re\frac{\varphi_t(0)\bar{\xi}}{(1+\varphi_t(0)\bar{\xi})t}.
\end{align*}
Since $\lim_{t\rightarrow 0^{+}}(\varphi_t(z)-z)/t=g(z)$ and $\lim_{t\rightarrow0^{+}}\varphi_t(z)=z$, $z\in\mathbb{D}$, we obtain
\[
\lim_{t\rightarrow 0^{+}}\frac{1-e^{tg'(\xi)}}{t}\le2\lim_{t\rightarrow 0^{+}}\frac{\re \varphi_t(0)\bar{\xi}}{t}=2\re g(0)\bar{\xi}
\]
or, finally,
\begin{subequations}
\begin{equation}
\label{equ1}
-g'(\xi)\le 2\re g(0)\bar{\xi}.
\end{equation}
If $\xi=1$, then (\ref{equ1}) becomes
\begin{equation}
\label{equ2}
-g'(1)\le 2 \re g(0).
\end{equation}
\end{subequations}
Let now $\phi$ be any holomorphic function, $\phi(\mathbb{D})\subseteq \mathbb{D}$ with $\phi(\xi)=\xi\in\partial{\mathbb{D}}$.

Consider the function $g:\mathbb{D}\rightarrow\mathbb{C}$ defined by
\begin{equation}
\label{equg}
g(z):=(w-z)(1-z\bar{w})\frac{\xi-\phi(z)}{\xi+\phi(z)},
\end{equation}
where $w\in\partial\mathbb{D}\cup\mathbb{D}$ is chosen such that $w\neq \xi$ and $\phi\in\Hol(\mathbb D, \mathbb D)$.

It follows by the Berkson-Porta formula (see, for example, \cite{BerkPorta, Shoikhet}) that $g$ is a holomorphic generator and $g(\xi)=0$.
Therefore, one can use inequality (\ref{equ1}).

We calculate again
\begin{equation}
\label{long}
\begin{array}{rl}
\displaystyle{g'(\xi)=\lim_{z\rightarrow \xi}\frac{g(z)}{z-\xi}=\lim_{z\rightarrow \xi}(z-w)(1-z\bar{w})\frac{\phi(z)-\xi}{z-\xi}\cdot\frac{1}{\xi+\phi(z)}=}&\\
\xi|1-\bar{w}\xi|^2\phi'(\xi)\cdot\dfrac{1}{2\xi}=\dfrac{|1-\bar{w}\xi|^2}{2}\cdot \phi'(\xi).
\end{array}
\end{equation}
In addition, (\ref{equg}) implies that
\begin{equation}
\label{equg0}
g(0)=w\frac{\xi-\phi(0)}{\xi+\phi(0)}.
\end{equation}
We plug (\ref{long}) and (\ref{equg0}) into (\ref{equ1}) and get
\begin{equation}
-\frac{|1-\bar{w}\xi|^2}{2}\phi'(\xi)\le 2\re\left[\bar{\xi}w\,\,\frac{\xi-\phi(0)}{\xi+\phi(0)}\right].
\end{equation}
In particular, if $\xi=1$, one can choose $w=-1$ to get the inequality
\[
\phi'(1)\ge\re\frac{1-\phi(0)}{1+\phi(0)}
\]
which coincides with (\ref{ineqGuniv}).

\end{proof}

\subsection{Osserman type inequality} 

Combining our approach with the following result for univalent
self-mappings of the  disk established by Anderson and Vasil'ev
\cite{AV}, we derive some new estimate for holomorphic
self-mappings which are not necessarily univalent that, in
particular, improves  Osserman's result.

Let us define the Pick function
$$
p_{\beta}(z)=\frac{4\beta z}{\left(1-z+\sqrt{(1-z)^2+4\beta z}\right)^2}=\beta z+\dots,
$$
that maps the unit disk $\mathbb D$ onto $\mathbb D\setminus(-1,\,-\beta/(1+\sqrt{1-\beta})^2]$. Set the M\"obius transformation $$B_z(\zeta)=\frac{1-\bar{z}}{1-z}\frac{\zeta-z}{1-\zeta\bar{z}}.$$ 

\begin{theorem}\cite{AV} Let $\varphi \in\mathrm{Hol}(\mathbb{D},\mathbb{D})$ be a univalent function which is
conformal at the boundary point  $\xi =1$,
$\angle\lim\limits_{z\rightarrow 1}\varphi(z)=1$, and let
$\liminf\limits_{z\rightarrow
1}\displaystyle\frac{1-|\varphi(z)|}{1-|z|}=:\alpha$ exists and is finite.
Then for all $z\in\mathbb{D}$,
\begin{equation}\label{AV}
|\varphi'(z)|\geq \frac{1}{\alpha
^{2}}\frac{\left(1-|z|^{2}\right)^{3}}{|1-z|^{4}}\frac{|1-\varphi(z)|^{4}}{\left(1-|\varphi(z)|^{2}\right)^{3}}.
\end{equation}
With a fixed $z\in \mathbb D$ and $\varphi(z)=w$, the equality sign is attained only for the function $\varphi^*=B^{-1}_{w}\circ p_{\beta}\circ B_z$,
where
$$
\beta=\frac{1}{\alpha^2}\frac{(1-|z|^2)^2|1-\varphi^*(z)|^4}{|1-z|^4(1-|\varphi^*(z)|^2)^2}.
$$
\end{theorem}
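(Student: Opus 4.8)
I would prove \eqref{AV} by the route used for the Corollary above --- reduced moduli of digons and extremal partitions --- the only new feature being that one of the two distinguished points of the configuration is now an interior point. The first step is a conformal normalization. Put $w=\varphi(z)$ and $\psi:=B_{w}\circ\varphi\circ B_{z}^{-1}$; then $\psi$ is univalent, fixes $0$ and $1$, and is conformal at $1$. Using $|B_{z}'(z)|=1/(1-|z|^{2})$, $|B_{z}'(1)|=(1-|z|^{2})/|1-z|^{2}$, the analogous identities for $B_{w}$, and the Julia--Carath\'eodory identification of $\alpha$ with the angular derivative $\varphi'(1)$ (legitimate since $\varphi$ is conformal at $1$), a direct computation shows that \eqref{AV} is equivalent to the model inequality
\[
|\psi'(0)|\,\psi'(1)^{2}\ge 1 ,
\]
whose extremal $\psi=p_{\beta}$ with $\beta=1/\psi'(1)^{2}$ (recall $p_{\beta}'(0)=\beta$ and $p_{\beta}'(1)=\beta^{-1/2}$) unwinds to $\varphi^{*}=B_{w}^{-1}\circ p_{\beta}\circ B_{z}$ with exactly the stated $\beta$. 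Note that such a $\psi$ always has $0$ as its Denjoy--Wolff point, so $\psi'(1)\ge 1$ and the bound is consistent with the Schwarz lemma.

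To obtain $|\psi'(0)|\,\psi'(1)^{2}\ge 1$ I would study one reduced modulus. Equip $\mathbb D$ with the boundary vertex $1$ (of opening $\pi$) and the marked interior point $0$, and let $m(\mathbb D;1,0)$ be the reduced modulus of the family of curves joining $1$ to $0$, defined by the limiting procedure of \eqref{eq:moddig} in which one subtracts $\tfrac1{\pi}\log\varepsilon_{1}$ near $1$ and $\tfrac1{2\pi}\log\varepsilon_{2}$ near $0$ (the angle at an interior point being $2\pi$); equivalently one may work with the admissible system consisting of a digon over $1$ and a ring domain around $0$ via a weighted extremal partition as in \cite{Em3,Kuz4,Sol7,VasBook}. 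This reduced modulus satisfies a change-of-variable rule in the spirit of \eqref{eq:modchange1},
\[
m(f(\mathbb D);f(1),f(0))=m(\mathbb D;1,0)+\tfrac1{\pi}\log|f'(1)|+\tfrac1{2\pi}\log|f'(0)|
\]
for a conformal $f$ with $f(0)=0$, $f(1)=1$, and it is monotone under inclusion: if $D_{1}\subseteq D_{2}$, $0\in D_{1}$, $1\in\partial D_{1}\cap\partial D_{2}$, then $m(D_{1};1,0)\ge m(D_{2};1,0)$, with equality only when $D_{2}\setminus D_{1}$ lies on the critical graph of the extremal quadratic differential of $(D_{2};1,0)$.

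Applying this to $\psi\colon\mathbb D\to\Omega:=\psi(\mathbb D)\subseteq\mathbb D$ gives $m(\Omega;1,0)=m(\mathbb D;1,0)+\tfrac1{\pi}\log\psi'(1)+\tfrac1{2\pi}\log|\psi'(0)|$, while $\Omega\subseteq\mathbb D$ forces $m(\Omega;1,0)\ge m(\mathbb D;1,0)$; subtracting and multiplying by $2\pi$ gives $2\log\psi'(1)+\log|\psi'(0)|\ge 0$, i.e. $|\psi'(0)|\,\psi'(1)^{2}\ge 1$, and undoing the normalization produces \eqref{AV}. For the equality statement, equality above forces $\mathbb D\setminus\Omega$ to lie on the critical graph of the quadratic differential attached to $(\mathbb D;1,0)$; that differential has double poles at $0$, $1$, $\infty$, and its reflection symmetry in $\mathbb T$ pins the critical arc inside $\mathbb D$ to the radial segment issuing from the point $-1$ antipodal to the vertex. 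Hence $\Omega=\mathbb D\setminus(-1,-c]$, $\psi$ is the uniquely determined conformal map of $\mathbb D$ onto this slit disk fixing $0$ and $1$ --- that is, $\psi=p_{\beta}$ --- and $\varphi^{*}=B_{w}^{-1}\circ p_{\beta}\circ B_{z}$ with $\beta=1/\psi'(1)^{2}$.

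The main obstacle is the extremal-length bookkeeping of the mixed configuration ``boundary vertex $+$ interior marked point'': one has to justify the existence of $m(\mathbb D;1,0)$ and the analogues of \eqref{eq:moddig}--\eqref{eq:modchange1} in this setting (or run the corresponding weighted extremal partition), and to identify the extremal quadratic differential and its critical graph sharply enough for the uniqueness claim. A secondary subtlety is that the hypothesis on $\alpha$ is an \emph{unrestricted} $\liminf$, so one must invoke conformality at $1$ together with Julia--Carath\'eodory to identify $\alpha$ with the vertex derivative $\varphi'(1)$ that enters the change-of-variable formula; and one must remember that $\psi(\mathbb D)$ is only an admissible competitor, which is precisely what yields the direction of the inequality.
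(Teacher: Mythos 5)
The paper does not actually prove this theorem; it is imported verbatim from \cite{AV} and only used later, so there is no internal proof to measure you against. Your overall strategy (M\"obius normalization to $\psi=B_w\circ\varphi\circ B_z^{-1}$ fixing $0$ and $1$, then a reduced-modulus/extremal-partition argument for the mixed configuration ``interior marked point $+$ boundary vertex'', with monotonicity coming from $\psi(\mathbb D)\subseteq\mathbb D$) is exactly the kind of argument used in \cite{AV} and in Section \ref{sec:two}, and the pieces you flag as needing care (existence of the mixed modulus, the weighted change-of-variable rule, identification of the critical graph of the quadratic differential $\frac{(1+w)^2}{w^2(1-w)^2}\,dw^2$ with the radial slit from $-1$) are real but standard in that literature.

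The genuine problem is your claim that \eqref{AV} \emph{as printed} is equivalent to $|\psi'(0)|\,\psi'(1)^2\ge 1$. It is not. Using $|B_z'(z)|=\frac{1}{1-|z|^2}$ and $|B_z'(1)|=\frac{1-|z|^2}{|1-z|^2}$ (and likewise for $B_w$), one gets $|\psi'(0)|=|\varphi'(z)|\frac{1-|z|^2}{1-|w|^2}$ and $\psi'(1)=\alpha\,\frac{1-|w|^2}{|1-w|^2}\,\frac{|1-z|^2}{1-|z|^2}$, and the printed inequality becomes
\[
|\psi'(0)|\,\psi'(1)^2\;\ge\;\Bigl(\tfrac{1-|z|^2}{1-|\varphi(z)|^2}\Bigr)^{2},
\]
which is not conformally intrinsic; conversely your model inequality $|\psi'(0)|\,\psi'(1)^2\ge1$ unwinds to
\[
|\varphi'(z)|\;\ge\;\frac{1}{\alpha^2}\,\frac{(1-|z|^2)\,|1-\varphi(z)|^4}{|1-z|^4\,\bigl(1-|\varphi(z)|^2\bigr)}\,,
\]
i.e.\ with first and last exponents $1$ rather than $3$. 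The exponent-$1$ version is the one consistent with the stated extremal data: your $\beta=1/\psi'(1)^2$ transforms exactly into the displayed formula for $\beta$, and $\varphi^*=B_w^{-1}\circ p_\beta\circ B_z$ gives equality there but \emph{not} in the printed \eqref{AV} (where the two sides differ by the factor $\bigl(\frac{1-|w|^2}{1-|z|^2}\bigr)^2$). Indeed the printed inequality is false: for the automorphism $\varphi(\zeta)=\frac{\zeta+a}{1+a\zeta}$ with $a=\tfrac12$ and $z=0$ one has $|\varphi'(0)|=\tfrac34$ while the right-hand side of \eqref{AV} equals $\tfrac43$. So your outline is aimed at the correct theorem and in doing so exposes a misprint in the statement as reproduced here; but as a proof of the statement literally posed, the ``direct computation'' step is wrong, and no argument can close that gap because the exponent-$3$ inequality has counterexamples. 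You should state explicitly that you are proving the corrected inequality (and note that the rewriting \eqref{O2} used later in the paper inherits the same issue).
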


\begin{theorem}
Let $\phi\in\mathrm{Hol}(\mathbb{D},\mathbb{D})$ and suppose that $\xi =1$ is its
boundary regular fixed point. Then
\begin{equation}\label{O1}
\phi'(1)\geq \frac{2}{\re
\frac{1-\phi(0)^{2}+\phi'(0)}{(1-\phi(0))^{2}}}.
\end{equation}
\end{theorem}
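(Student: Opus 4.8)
The plan is to run the same semigroup transfer as in the previous subsection, but now feeding in the Anderson--Vasil'ev inequality \eqref{AV} in place of \eqref{inegGuniv2}. Starting from $\phi$, I would build a holomorphic generator $g$ with a null point at $1$, apply \eqref{AV} to the univalent flow maps $\varphi_t$ it generates, differentiate the resulting inequality at $t=0$ to obtain a relation among $g(0)$, $g'(0)$ and $g'(1)$, and finally read off \eqref{O1} after expressing these three quantities through $\phi(0)$, $\phi'(0)$ and $\phi'(1)$.

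For the generator I would take
\[
g(z)=(1-z)^2\,\frac{1+\phi(z)}{1-\phi(z)},
\]
which is of Berkson--Porta form with $w=1$ and $p(z)=\frac{1+\phi(z)}{1-\phi(z)}$ (holomorphic in $\mathbb D$ with $\re p>0$ since $\phi(\mathbb D)\subseteq\mathbb D$); hence $g$ generates a semigroup $(\varphi_t)_{t\ge 0}$ of univalent self-maps of $\mathbb D$ with Denjoy--Wolff point $1$, so $\varphi_t(1)=1$ for all $t$. Because $1$ is a regular fixed point of $\phi$ and $\frac{1-\phi(z)}{1-z}\to\phi'(1)$ angularly, I expect $g(1)=0$ together with the finite angular derivative $g'(1)=\angle\lim_{z\to1}\frac{g(z)}{z-1}=-\frac{2}{\phi'(1)}$; consequently each $\varphi_t$ is conformal at $1$ with $\varphi_t'(1)=e^{tg'(1)}=e^{-2t/\phi'(1)}\in(0,1)$. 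A short computation also gives $g(0)=\frac{1+\phi(0)}{1-\phi(0)}$ and $g'(0)=2\left(\frac{\phi'(0)}{(1-\phi(0))^2}-\frac{1+\phi(0)}{1-\phi(0)}\right)$.

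Next I would apply \eqref{AV} to the univalent map $\varphi_t$ at the point $z=0$. Since $\varphi_t$ fixes $1$ and is conformal there, the quantity $\alpha$ in \eqref{AV} equals $\varphi_t'(1)$, and \eqref{AV} becomes $|\varphi_t'(0)|\ge \varphi_t'(1)^{-2}\,|1-\varphi_t(0)|^{4}(1-|\varphi_t(0)|^{2})^{-3}$, an inequality both of whose sides equal $1$ at $t=0$. Taking logarithms, dividing by $t$ and letting $t\to 0^+$ --- using $\lim_{t\to0^+}(\varphi_t(z)-z)/t=g(z)$, $\frac{d}{dt}\varphi_t'(z)\big|_{t=0}=g'(z)$, $\varphi_t'(1)=e^{tg'(1)}$ and $\frac{d}{dt}|\varphi_t(0)|^{2}\big|_{t=0}=0$ --- I expect to arrive at the generator inequality $\re g'(0)\ge -2g'(1)-4\re g(0)$, equivalently $2g'(1)\ge -\re g'(0)-4\re g(0)$. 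Substituting the three quantities above and simplifying then yields $-\tfrac{4}{\phi'(1)}\ge -2\re\tfrac{\phi'(0)}{(1-\phi(0))^{2}}-2\re\tfrac{1+\phi(0)}{1-\phi(0)}$, that is,
\[
\frac{2}{\phi'(1)}\le\re\left(\frac{1+\phi(0)}{1-\phi(0)}+\frac{\phi'(0)}{(1-\phi(0))^{2}}\right)=\re\frac{1-\phi(0)^{2}+\phi'(0)}{(1-\phi(0))^{2}},
\]
which is exactly \eqref{O1} (and, incidentally, shows its right-hand side is positive).

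The hard part will be the bookkeeping in the differentiation step rather than the final algebra: one must check that the $\phi$-dependent generator $g$ genuinely has the finite angular derivative $g'(1)=-2/\phi'(1)$, that every $\varphi_t$ with $t>0$ meets all the hypotheses of the Anderson--Vasil'ev theorem (univalence, conformality at $1$, $\angle\lim_{z\to1}\varphi_t(z)=1$, and finiteness of the corresponding $\alpha$), that $\varphi_t'(1)=e^{tg'(1)}$, and that passing to the one-sided $t$-derivative of the validated inequality at $t=0$ is legitimate. Once these semigroup-theoretic points are in place, the computation of $g(0)$, $g'(0)$, $g'(1)$ and the final simplification are routine.
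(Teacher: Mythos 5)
Your proposal is correct and follows essentially the same route as the paper: the same Berkson--Porta generator $g(z)=(1-z)^2\frac{1+\phi(z)}{1-\phi(z)}$, the same transfer of the Anderson--Vasil'ev inequality to the flow $(\varphi_t)$ with $\varphi_t'(1)=e^{tg'(1)}$, and the same one-sided $t$-differentiation at $t=0$ yielding the generator inequality $2g'(1)\ge-\re\bigl(g'(0)+4g(0)\bigr)$ before substituting back. The only cosmetic differences are that you specialize to $z=0$ and take logarithms before differentiating, whereas the paper differentiates the inequality for general $z$ and then sets $z=0$; the resulting inequality and final algebra coincide.
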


\begin{proof}
Let $S=\{\varphi_t\}_{t\geq 0}\subset\mathrm{Hol}(\mathbb{D},\mathbb{D})$ be a semigroup of conformal self-maps of $\mathbb D$ generated by
$g$, and
with a boundary regular fixed point $\xi =1$. Since all functions $\varphi_t$, $t\geq 0$, are univalent, they
satisfy inequality \eqref{AV}. We rewrite it in the form
\begin{equation}\label{O2}
\varphi'_{t}(1)^{2}\geq
\frac{1}{|\varphi_t'(z)|}\frac{\left(1-|z|^{2}\right)^{3}}{|1-z|^{4}}\frac{|1-\varphi_t(z)|^{4}}{\left(1-|\varphi_t(z)|^{2}\right)^{3}}.
\end{equation}
Obviously, the map $\varphi_t\big|_{t=0^+}=id$ gives the equality in \eqref{O2}. Then the same inequality holds for the $t$-derivatives of the both sides of \eqref{O2} at
the point $0^+$.
The derivative of the left-hand side at $t=0^{+}$ becomes
\begin{equation}
\frac{d}{dt}\varphi'_{t}(1)^{2}\bigg|_{t=0^+}=\frac{d}{dt}e^{2tg'(1)}\bigg|_{t=0^+}=2g'(1).
\end{equation}
In the right-hand side, the derivative is equal to the limit
\begin{equation}
\begin{split}
R:&=\lim\limits _{t\rightarrow
0^{+}}\frac{\displaystyle\frac{1}{|\varphi'_{t}(z)|}\displaystyle\frac{\left(1-|z|^{2}\right)^{3}}{|1-z|^{4}}\displaystyle\frac{|1-\varphi_t(z)|^{4}}{\left(1-|\varphi_t(z)|^{2}\right)^{3}}-1}{t}\\
&=\lim\limits _{t\rightarrow
0^{+}}\frac{\left(1-|z|^{2}\right)^{3}|1-\varphi_t(z)|^{4}-|\varphi'_{t}(z)|
|1-z|^{4}\left(1-|\varphi_t(z)|^{2}\right)^{3}}{|\varphi'_{t}(z)|
|1-z|^{4}\left(1-|\varphi_t(z)|^{2}\right)^{3} t}\\ &=\lim\limits
_{t\rightarrow
0^{+}}\frac{\left(1-|z|^{2}\right)^{3}|1-\varphi_t(z)|^{4}-|\varphi'_{t}(z)|
|1-z|^{4}\left(1-|\varphi_t(z)|^{2}\right)^{3}}{|1-z|^{4}\left(1-|z|^{2}\right)^{3}
t}\\&=\lim\limits _{t\rightarrow
0^{+}}\frac{\left(1-|z|^{2}\right)^{6}|1-\varphi_t(z)|^{8}-|\varphi'_{t}(z)|^{2}
|1-z|^{8}\left(1-|\varphi_t(z)|^{2}\right)^{6}}{2|1-z|^{8}\left(1-|z|^{2}\right)^{6}
t}
\end{split}
\notag
\end{equation}
Since for all $z\in\mathbb{D}$, $\lim\limits _{t\rightarrow
0^{+}}\displaystyle\frac{\varphi_t(z)-z}{t}=g(z)$ and $\lim\limits
_{t\rightarrow 0^{+}}\displaystyle\frac{\varphi'_{t}(z)-1}{t}=g'(z)$,
the first term in the numerator is
\begin{equation}
\begin{split}
&\left(1-|z|^{2}\right)^{6}|1-\varphi_t(z)|^{8}=\left(1-|z|^{2}\right)^{6}|(1-z)+(z-\varphi_t(z))|^{8}\\
&=\left(1-|z|^{2}\right)^{6}\left(|1-z|^{2}+2\re
(1-\overline{z})(z-\varphi_t(z))+o(t) \right)^{4}\\
&=\left(1-|z|^{2}\right)^{6}\left(|1-z|^{4}+4|1-z|^{4}\re
(1-\overline{z})(z-\varphi_t(z))+o(t) \right)^{2}\\
&=\left(1-|z|^{2}\right)^{6}\left(|1-z|^{8}+8|1-z|^{8}\re
(1-\overline{z})(z-\varphi_t(z))+o(t) \right),\\
\end{split}
\notag
\end{equation}
and the second term is
\begin{equation}
\begin{split}
&|\varphi'_{t}(z)|^{2}|1-z|^{8}\left(1-|\varphi_t(z)|^{2}\right)^{6}\\
&=|1-z|^{8}\left(1+2\re(\varphi'_{t}(z)-1)+o(t)\right)\left(1-|z|^{2}-2\re\overline{z}(\varphi_t(z)-z)+o(t)\right)^{6}\\
&=|1-z|^{8}\left(1+2\re
\left(\varphi'_{t}(z)-1\right)\right)\left(\left(1-|z|^{2}\right)^{6}-12\left(1-|z|^{2}\right)^{5}\re
\overline{z}(\varphi_t(z)-z)
\right)+o(t)\\&=|1-z|^{8}\left(\left(1-|z|^{2}\right)^{6}-12\left(1-|z|^{2}\right)^{5}\re
\overline{z}(\varphi_t(z)-z)+2\left(1-|z|^{2}\right)^{6}\re
\left(\varphi'_{t}(z)-1\right) \right)+o(t).
\end{split}
\notag
\end{equation}
Hence,
\begin{equation}
\begin{split}
&R=\lim\limits _{t\rightarrow
0^{+}}\frac{8\left(1-|z|^{2}\right)\re
(1-\overline{z})(z-\varphi_t(z))+12\re
\overline{z}(\varphi_t(z)-z)-2\left(1-|z|^{2}\right)\re (\varphi'_{t}(z)-1)
}{2\left(1-|z|^{2}t\right)}\\&=\frac{-8\left(1-|z|^{2}\right)\re
(1-\overline{z})g(z)+12\re
\overline{z}g(z)-2\left(1-|z|^{2}\right)\re
g'(z)}{2\left(1-|z|^{2}\right)}.
\end{split}
\notag
\end{equation}
Consequently, for all $z\in\mathbb{D}$,
\begin{equation}
2g'(1)\geq \frac{-8\left(1-|z|^{2}\right)\re
(1-\overline{z})g(z)+12\re
\overline{z}g(z)-2\left(1-|z|^{2}\right)\re
g'(z)}{2\left(1-|z|^{2}\right)}. \notag
\end{equation}
In particular, for $z=0$,
\begin{equation}\label{O3}
2g'(1)\geq -\re \left(g'(0)+4g(0)\right).
\end{equation}
Suppose $\phi\in\mathrm{Hol}(\mathbb{D},\mathbb{D})$ with the regular fixed point
$\xi =1$. Then the function
$$g(z)=(1-z)^{2}\frac{1+\phi(z)}{1-\phi(z)}$$ is a generator of a semigroup with the
boundary regular fixed point $\xi =1$, and so it satisfies
inequality \eqref{O3}. Simple calculations show that
\begin{equation}
g'(1)=-\frac{2}{\phi'(1)}, \quad
g'(0)=\frac{2\phi'(0)}{(1-\phi(0))^{2}}-2\frac{1+\phi(0)}{1-\phi(0)}, \quad
g(0)=\frac{1+\phi(0)}{1-\phi(0)}. \notag
\end{equation}
Substituting these expressions in \eqref{O3}, we have \eqref{O1}.
\end{proof}

An analogous estimate for angular derivatives including the values
of $\phi(0)$, $\phi'(0)$ and $\phi'(1)$, was established by Osserman in
\cite{Osserman}. For the case of the boundary regular fixed point
$\xi =1$, we write his estimate in the form which is
convenient to compare with our result:
\begin{equation}
\phi'(\xi )\geq
\frac{2}{\displaystyle\frac{1-|\phi(0)|^{2}+|\phi'(0)|}{(1-|\phi(0)|)^{2}}}=:L.
\end{equation}
It is easy to see that our inequality improves this estimate. Moreover, Osserman's
estimate is the same for all boundary fixed points. Hence, if the
Denjoy--Wolf point of $\phi$ is situated on the boundary of the unit
disk, then $L\leq 1$ and, consequently, for all other (repelling)
boundary fixed points, it does not give additional information. Thus, in a simple
example of the hyperbolic automorphism
$\phi(z)=\displaystyle\frac{2z-1}{2-z}$ of $\mathbb{D}$ with fixed points
at $\pm 1$, $\phi'(-1)=\displaystyle\frac{1}{3}$ and $\phi'(1)=3$,
Osserman's estimate gives $\phi'(1)\geq \displaystyle\frac{1}{3}$
whereas \eqref{O1} implies that $\phi'(1)\geq 3$.

Further, we present some estimates for holomorphic self-mappings
of the unit disk whose images do not contain the origin, i.e.,
$0\notin \phi(\mathbb{D})$. We also derive them from some inequalities for
generators.
\begin{theorem}
Let $\phi\in\mathrm{Hol}(\mathbb{D},\mathbb{D})$. Suppose that $\phi(\mathbb{D} )$ does not
separate the origin and the boundary $\partial\mathbb{D}$ and
$0\notin \phi(\mathbb{D})$. If $\xi =1$ is a boundary regular fixed
point of $\phi$ then
\begin{equation}\label{origin1}
\phi'(1)\geq -\frac{\ln |\phi(0)|}{2}.
\end{equation}
\end{theorem}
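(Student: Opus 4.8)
The plan is to reuse the generator technique of the two preceding theorems: build from $\phi$, via the Berkson--Porta formula, an infinitesimal generator $g$ of a semigroup with Denjoy--Wolff point $\xi=1$, substitute it into the generator inequality \eqref{equ2}, namely $-g'(1)\le 2\re g(0)$, and then translate the outcome into an estimate for $\phi$. The only new point compared with the proof of \eqref{O1} is the choice of the Carath\'eodory factor: in place of $\frac{1+\phi(z)}{1-\phi(z)}$ one takes $\dfrac{-1}{\log\phi(z)}$, and this is where the assumptions $0\notin\phi(\mathbb D)$ and ``$\phi(\mathbb D)$ does not separate $0$ from $\partial\mathbb D$'' are put to work.

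In detail, I would first fix a single-valued holomorphic branch of $\log\phi$ on $\mathbb D$ --- this exists because $\phi$ is zero-free on the simply connected domain $\mathbb D$ --- normalized (using the hypothesis on $\phi(\mathbb D)$) so that $\angle\lim_{z\to1}\log\phi(z)=\log 1=0$. Since $\re\log\phi(z)=\ln|\phi(z)|<0$ throughout $\mathbb D$, the function $p(z):=-1/\log\phi(z)$ is holomorphic on $\mathbb D$ (because $\log\phi$ is zero-free there) and has positive real part, $\re p(z)=-\ln|\phi(z)|/|\log\phi(z)|^{2}>0$. Hence
\[
g(z):=-\frac{(1-z)^{2}}{\log\phi(z)}=(1-z)(1-z)\,p(z)
\]
is, by the Berkson--Porta representation with $w=1$, the infinitesimal generator of a non-trivial semigroup of holomorphic self-maps of $\mathbb D$ with Denjoy--Wolff point $1$ (non-trivial since $\phi$ cannot be constant); in particular $g(1)=0$, and since $\xi=1$ is a \emph{regular} fixed point of $\phi$, $g'(1)$ will turn out to be finite, so that \eqref{equ2} is applicable to $g$.

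The next step is the two elementary computations
\[
g(0)=-\frac{1}{\log\phi(0)},\qquad g'(1)=\lim_{z\to1}\frac{g(z)}{z-1}=\lim_{z\to1}\frac{-(z-1)}{\log\phi(z)}=-\frac{1}{\phi'(1)},
\]
the second being a consequence of the angular expansion $\phi(z)=1+\phi'(1)(z-1)+o(z-1)$, which (together with the normalization of the branch) gives $\log\phi(z)=\phi'(1)(z-1)+o(z-1)$ as $z\to1$ nontangentially. Substituting into \eqref{equ2} gives
\[
\frac{1}{\phi'(1)}\le 2\re\!\left(\frac{-1}{\log\phi(0)}\right)=\frac{-2\ln|\phi(0)|}{|\log\phi(0)|^{2}}\le\frac{-2\ln|\phi(0)|}{(\ln|\phi(0)|)^{2}}=\frac{-2}{\ln|\phi(0)|},
\]
where the middle step uses $|\log\phi(0)|^{2}=(\ln|\phi(0)|)^{2}+(\arg\phi(0))^{2}\ge(\ln|\phi(0)|)^{2}$ and $-\ln|\phi(0)|>0$. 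As both extreme terms are positive, inverting this chain of inequalities yields exactly \eqref{origin1} --- and, before the middle relaxation, the sharper estimate $\phi'(1)\ge\big((\ln|\phi(0)|)^{2}+(\arg\phi(0))^{2}\big)/(-2\ln|\phi(0)|)$.

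I expect the one genuinely delicate point to be the logarithmic branch. One must show that, with the normalization $\angle\lim_{z\to1}\log\phi(z)=0$, the branch agrees near $\xi=1$ with the principal logarithm of $\phi(z)$, so that indeed $\log\phi(z)=\phi'(1)(z-1)+o(z-1)$ and therefore $g'(1)=-1/\phi'(1)$ rather than $0$; if $\phi$ is allowed to wind around the origin this normalization, and hence the whole argument, becomes subtle, and it is precisely here that the non-separation hypothesis is used, while $0\notin\phi(\mathbb D)$ is what makes $-1/\log\phi$ a function of positive real part. For completeness I would also record the following semigroup-free shortcut to the same bound: $H(z):=\dfrac{1+\log\phi(z)}{1-\log\phi(z)}$ is a holomorphic self-map of $\mathbb D$ with $H(1)=1$, so the general Cowen--Pommerenke estimate \eqref{CPe} applies to $H$, and, using $H'(1)=2\phi'(1)$ and $\dfrac{1+H(0)}{1-H(0)}=-\dfrac{1}{\log\phi(0)}$, it collapses to \eqref{origin1} after the same elementary manipulation.
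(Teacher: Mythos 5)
Your proof is correct, and it takes a genuinely different route from the paper's. The paper works with the generator \eqref{origin2}, $g(z)=-z\,\frac{1-\phi(z)^{1/n}}{1+\phi(z)^{1/n}}$, whose Berkson--Porta point is the origin, substitutes $g(0)=0$, $g'(0)=-\frac{1-\phi(0)^{1/n}}{1+\phi(0)^{1/n}}$ and $g'(1)=\frac{\phi'(1)}{2n}$ into the Anderson--Vasil'ev--derived inequality \eqref{O3}, and only then passes to the limit $n\to\infty$ using $n\frac{1-w^{1/n}}{1+w^{1/n}}\to-\frac{1}{2}\log w$. You take the logarithm at the outset: your generator $g(z)=-(1-z)^{2}/\log\phi(z)$ has Berkson--Porta point $w=1$ and is fed into the Cowen--Pommerenke--derived inequality \eqref{equ2} instead of \eqref{O3}. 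The hypotheses enter in the same way in both arguments ($0\notin\phi(\mathbb D)$ together with the non-separation condition produce the distinguished branch of the root, respectively of the logarithm, vanishing angularly at $1$, and $\re\log\phi<0$ supplies the Carath\'eodory factor), and your computations $g(0)=-1/\log\phi(0)$ and $g'(1)=-1/\phi'(1)$ are right, as is the final relaxation $|\log\phi(0)|^{2}\ge(\ln|\phi(0)|)^{2}$. What your route buys: it avoids the limiting process in $n$; it records the strictly sharper intermediate bound $\phi'(1)\ge\bigl((\ln|\phi(0)|)^{2}+(\im\log\phi(0))^{2}\bigr)/(-2\ln|\phi(0)|)$ before the argument term is discarded; and your closing observation is worth keeping --- applying \eqref{CPe} directly to $H=\frac{1+\log\phi}{1-\log\phi}$, a holomorphic self-map of $\mathbb D$ with $H(1)=1$ and $H'(1)=2\phi'(1)$, yields \eqref{origin1} with no semigroup machinery at all. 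What the paper's route buys is uniformity: the same generator \eqref{origin2} is reused verbatim in the Harnack-type theorem that follows, so the two results come from a single construction.
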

\begin{proof}
For a function $\phi$ which satisfies our assumptions, define
the generator
\begin{equation}\label{origin2}
g(z)=-z\frac{1-\phi(z)^{\frac{1}{n}}}{1+\phi(z)^{\frac{1}{n}}}, \quad
z\in\mathbb{D} ,
\end{equation}
where $w^{\frac{1}{n}}$ means the analytic branch of the root fixing $1$. The generator $g$ has a
regular null point at $1$, and therefore, satisfies inequality \eqref{O3}.
Substituting
\begin{equation}
g(0)=0, \quad
g'(0)=-\frac{1-\phi(0)^{\frac{1}{n}}}{1+\phi(0)^{\frac{1}{n}}},\quad
\mbox{and} \quad g'(1)=\frac{1}{n}\frac{\phi'(1)}{2}, \notag
\end{equation}
in \eqref{O3}, we have $$\phi'(1)\geq n\,\re
\frac{1-\phi(0)^{\frac{1}{n}}}{1+\phi(0)^{\frac{1}{n}}},$$ and in the
limit case, as $n\rightarrow\infty$, we have \eqref{origin1}.
\end{proof}

Notice, that if $\phi(0)\in (0,1)$, then one can show that inequality
\eqref{origin1} improves the known estimate $$\phi'(1)\geq
\frac{1}{\re \displaystyle\frac{1+\phi(0)}{1-\phi(0)}},$$ which holds for
holomorphic self-mappings of $\mathbb{D}$ without an additional
restriction on the image $\phi(\mathbb{D} )$ (see
\cite{CowenPommerenke}).

A similar method with generator \eqref{origin2} applied to the
Harnack inequality for a holomorphic function $p$ on $\mathbb{D}$ with
 positive real part (see, for example, \cite{Shoikhet})
\begin{equation}\label{Har}
\frac{1-|z|}{1+|z|}\re p(0)\leq \re p(z)\leq
\frac{1+|z|}{1-|z|}\re p(0),
\end{equation}
gives again some estimate for holomorphic self-maps of the disk
such that $0\notin \phi(\mathbb{D})$.
\begin{theorem}
Let $\phi\in\mathrm{Hol}(\mathbb{D},\mathbb{D})$. Suppose that $\phi(\mathbb{D} )$ does not
separate the origin and the boundary $\partial\mathbb{D}$ and $0\notin
\phi(\mathbb{D})$. Then for all $z\in \mathbb{D}$,
\begin{equation}
|\phi(0)|^{\frac{1+|z|}{1-|z|}}\leq |\phi(z)|\leq
|\phi(0)|^{\frac{1-|z|}{1+|z|}}.
\end{equation}
\end{theorem}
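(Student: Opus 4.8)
The plan is to read the inequality straight off the Harnack inequality \eqref{Har}, as the sentence preceding the theorem suggests. The one thing to notice is that, under the hypotheses, $-\log\phi$ is a holomorphic function on $\mathbb D$ with positive real part, and then \eqref{Har} does everything.

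First I would observe that since $0\notin\phi(\mathbb D)$ and $\mathbb D$ is simply connected, $\phi$ admits a single-valued analytic logarithm on $\mathbb D$; fix one and set $p(z):=-\log\phi(z)$. Since $\phi$ maps $\mathbb D$ into $\mathbb D$, we have $|\phi(z)|<1$ for all $z\in\mathbb D$, hence
\[
\re p(z)=-\log|\phi(z)|>0,\qquad z\in\mathbb D,
\]
so $p$ meets the hypothesis of \eqref{Har}. I would remark in passing that only the omitted-value condition $0\notin\phi(\mathbb D)$ is used here; the non-separation hypothesis is carried over from the preceding statement, where it matches the generator \eqref{origin2}, but it is not needed for the present deduction.

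Next I would apply \eqref{Har} to this $p$. With $\re p(0)=-\log|\phi(0)|$ and $\re p(z)=-\log|\phi(z)|$ it reads
\[
\frac{1-|z|}{1+|z|}\bigl(-\log|\phi(0)|\bigr)\le -\log|\phi(z)|\le \frac{1+|z|}{1-|z|}\bigl(-\log|\phi(0)|\bigr).
\]
Multiplying through by $-1$ (which reverses the chain) and then applying the increasing function $t\mapsto e^{t}$, using $e^{\lambda\log|\phi(0)|}=|\phi(0)|^{\lambda}$, yields
\[
|\phi(0)|^{\frac{1+|z|}{1-|z|}}\le |\phi(z)|\le |\phi(0)|^{\frac{1-|z|}{1+|z|}},
\]
which is the assertion (at $z=0$ it collapses to equality, as it should).

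I expect no real obstacle: the whole content is the elementary remark that $\log|\phi|$ is the negative of the real part of a positive-real-part holomorphic function. If one prefers to keep the argument formally parallel to the proof of \eqref{origin1}, one can instead apply \eqref{Har} to $p_{n}(z)=\dfrac{1-\phi(z)^{1/n}}{1+\phi(z)^{1/n}}$ (positive real part because $|\phi|<1$), multiply by $n$, and let $n\to\infty$, using $n\,\re p_{n}(z)\to-\tfrac{1}{2}\log|\phi(z)|$; the factor $\tfrac{1}{2}$ cancels on both sides and the same two-sided bound emerges. The only point to check is that $\phi(0)\neq 0$, which is guaranteed by $0\notin\phi(\mathbb D)$, so the logarithms and exponents above are finite.
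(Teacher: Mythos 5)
Your proof is correct and rests on the same engine as the paper's: the Harnack inequality \eqref{Har} applied to a holomorphic function with positive real part whose real part is a multiple of $-\log|\phi|$. The only difference is that you apply \eqref{Har} directly to $p=-\log\phi$ (single-valued since $\mathbb D$ is simply connected and $\phi$ omits $0$, with $\re p=-\log|\phi|>0$ because $|\phi|<1$), whereas the paper applies it to $p_n=\frac{1-\phi^{1/n}}{1+\phi^{1/n}}$, multiplies by $n$, and lets $n\to\infty$ using $n\,\re p_n\to-\frac12\log|\phi|$ — a limit procedure that manufactures the same logarithm you write down at once. Your shortcut is cleaner and, as you correctly observe, makes it transparent that only the omitted-value condition $0\notin\phi(\mathbb D)$ is used: the non-separation hypothesis plays no role in this theorem (it is relevant in the paper's framework for fixing a coherent branch of $\phi^{1/n}$ in the companion generator construction, but even in the paper's own limit argument here the real part $\re p_n>0$ and the limit of $n\,\re p_n$ are branch-independent). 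Your sign bookkeeping — reversing the chain under multiplication by $-1$ and then exponentiating — is accurate and reproduces the stated two-sided bound, with equality at $z=0$.
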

\begin{proof}
Suppose that a function $\phi$ satisfies our assumptions. Then for
$$p(z)=\frac{1-\phi(z)^{\frac{1}{n}}}{1+\phi(z)^{\frac{1}{n}}}, \quad
n\in\mathbb{N}, \quad z\in\mathbb{D} ,$$ inequality \eqref{Har} holds,
and so $$\frac{1-|z|}{1+|z|}n\, \re
\frac{1-\phi(0)^{\frac{1}{n}}}{1+\phi(0)^{\frac{1}{n}}}\leq n\, \re
\frac{1-\phi(z)^{\frac{1}{n}}}{1+\phi(z)^{\frac{1}{n}}}\leq
\frac{1+|z|}{1-|z|} n\, \re
\frac{1-\phi(0)^{\frac{1}{n}}}{1+\phi(0)^{\frac{1}{n}}}.$$ Since for
all $z\in\mathbb{D}$, $$\lim\limits _{n\rightarrow \infty}
n\frac{1-\phi(z)^{\frac{1}{n}}}{1+\phi(z)^{\frac{1}{n}}}=-\frac{\log
\phi(z) }{2},$$ we have $$-\frac{1-|z|}{1+|z|}\re \log \phi(0)\leq -\re
\log \phi(z)\leq -\frac{1+|z|}{1-|z|}\re \log \phi(0), $$ or which is
the same,
\begin{equation}
|\phi(0)|^{\frac{1+|z|}{1-|z|}}\leq |\phi(z)|\leq
|\phi(0)|^{\frac{1-|z|}{1+|z|}}. \notag
\end{equation}
\end{proof}

Observe, that the right-hand side of this inequality improves the
known Lindel\"of's estimate \cite{LE-09} (which holds for all
holomorphic self-mappings of $\mathbb{D}$ without an additional
assumption on the image $\phi(\mathbb{D})$): $$|\phi(z)|\leq
\frac{|z|+|\phi(0)|}{1+|z||\phi(0)|}.$$ Indeed, for each $a\in (0,1)$,
the real-valued functions $$\varphi (x)=a^{x} \quad \mbox{and}
\quad \psi (x)=\frac{(1-x)+a(1+x)}{(1+x)+a(1-x)}, \quad x\in
(0,1),$$ are strictly decreasing and coincide at the points $x=0$
and $x=1$, whereas $\varphi \left(\frac{1}{2}\right)<\psi
\left(\frac{1}{2}\right)$. So, $\varphi (x)<\psi (x)$ for all
$x\in (0,1)$. Setting $x=\frac{1-|z|}{1+|z|}$ and $a=|\phi(0)|$, we
have
$$|\phi(0)|^{\frac{1-|z|}{1+|z|}}<\frac{|z|+|\phi(0)|}{1+|z||\phi(0)|}$$
for all $0\neq z\in\mathbb{D} $.

\section{Two fixed boundary points}
\label{sec:two}
In this section we obtain estimates for angular derivatives of  a univalent self-map of the unit disk with two fixed boundary points.

Let $\xi_1$ and $\xi_2$ be two different points of the unit circle $\mathbb T$. Let $\varphi\colon \mathbb D\to\mathbb D$ be univalent and $\varphi(\xi_j)=\xi_j$, $j=1,2$. We are interested in the lower estimate of the product $\varphi'(\xi_1)\varphi'(\xi_2)$ for functions $\varphi$ with a fixed value of $\varphi(0)\in\mathbb D$.
Again without loss of generality, applying rotation, we assume that  $\xi_1=e^{-i\theta}$,  $\xi_2=e^{i\theta}$, and $\theta\in (0,\frac{\pi}{2}]$.

In order to formulate the main result we need several preparatory definitions and notations.
For every $a\in\mathbb D$ let us define a real-valued continuous function $\Phi(a)$ by
\[
\Phi(a)=
\begin{cases}
\frac{1-|a|^2}{2(\cos\theta-\re a)}-\sqrt{1+\left(\frac{1-|a|^2}{2(\cos\theta-\re a)}\right)^2-\frac{1-|a|^2}{\cos\theta-\re a}\cos\theta}, & \text{if $\re a<\cos\theta$};\\
\cos\theta, & \text{if $\re a=\cos\theta$};\\
\frac{1-|a|^2}{2(\cos\theta-\re a)}+\sqrt{1+\left(\frac{1-|a|^2}{2(\cos\theta-\re a)}\right)^2-\frac{1-|a|^2}{\cos\theta-\re a}\cos\theta}, & \text{if $\re a>\cos\theta$}.
\end{cases}
\]
Geometrically, this function defines the intersection of the arc of a circle containing the points $a$ and $e^{\pm i\theta}$ with the interval $(-1,1)$.
\begin{figure}[h!]
  \centering
    \includegraphics[width=0.5\textwidth]{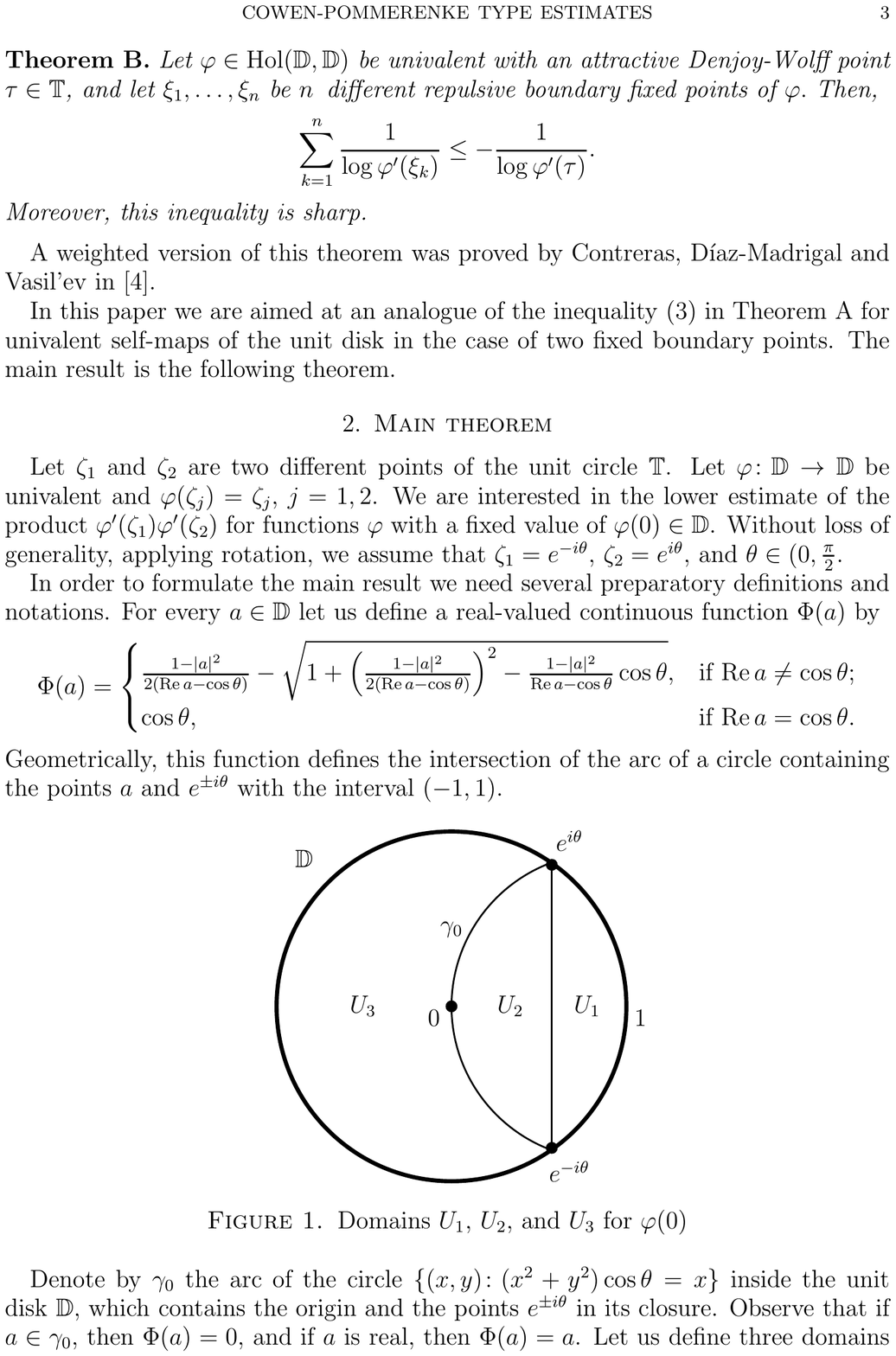}
  \caption{Domains $U_1$, $U_2$, and $U_3$ for $\varphi(0)$}
\end{figure}

Denote by $\gamma_0$ the arc of the circle $\{(x,y)\colon (x^2+y^2)\cos\theta=x\}$ inside the unit disk $\mathbb D$, which contains the origin and the points $e^{\pm i\theta}$ in its closure. Observe that if $a\in \gamma_0$, then $\Phi(a)=0$, and if $a$ is real, then $\Phi(a)=a$. Let us define three domains $U_1$, $U_2$, and $U_3$ of range of  $\varphi(0)$ as in Fig.1.  By $U_1$ we denote the segment between the unit circle to the right of the interval
connecting the points $e^{\pm i\theta}$. By $U_2$ we denote the segment between the  circle $\gamma_0$ to the left of the interval
connecting the points $e^{\pm i\theta}$. Finally, by $U_3$ we denote the domain between the unit circle to the left from $\gamma_0$.

Now let us define the future extremal functions. Let $\zeta(z)=z+\frac{1}{z}$ denote the Joukowski map. For a fixed value of $x\in [0,1)$ we define the Pick function $p^+_x(z)$ as a superposition of three functions $p^+_x(z)=\zeta^{-1}\circ u^+\circ \zeta(z)$, where $u^+(\zeta)$ is a M\"obius map given by
\[
u^+(\zeta)=\frac{(1+x^2)\zeta-4x\cos\theta}{x\zeta+(1-x)^2-2x\cos\theta}.
\]
Observe that $p^+_x(e^{\pm i\theta})=e^{\pm i\theta}$ and the function $p^+_x(z)$ maps the disk $\mathbb D$ onto the domain $\mathbb D\setminus (-1,r^+]$, where
\[
r^+=\frac{2\sqrt{2x(1+x^2)\cos\theta}-x^2-2x\cos\theta-1}{x^2-2x\cos\theta+1}.
\]
The angular derivative is
\[
(p_x^+)'(e^{\pm i\theta})=\frac{1-2x\cos\theta+x^2}{(1-x)^2}.
\]

Analogously, for a fixed value of $x\in (-1,0]$ we define the Pick function $p^-_x(z)$ as a superposition of three functions $p^-_x(z)=\zeta^{-1}\circ u^-\circ \zeta(z)$, where $u^-(\zeta)$ is a M\"obius map given by
\[
u^-(\zeta)=\frac{-(1+x^2)\zeta+4x\cos\theta}{x\zeta-(1-x)^2-2x\cos\theta}.
\]
Observe that again $p^-_x(e^{\pm i\theta})=e^{\pm i\theta}$ and the function $p^-_x(z)$ maps the disk $\mathbb D$ onto the domain $\mathbb D\setminus [r^-_,1)$, where
\[
r^-=\frac{x^2-2x\cos\theta+1-2\sqrt{2x(1+x^2)\cos\theta}}{x^2-2x\cos\theta+1}.
\]
The angular derivative is
\[
(p_x^-)'(e^{\pm i\theta})=\frac{1-2x\cos\theta+x^2}{(1+x)^2}.
\]
We notice that if $x=0$, then $p^+_0(z)\equiv p^-_0(z)\equiv z$.

Finally, let us define the map $\mob(z,t)$, $t\in(-\theta,\theta)$ given as a solution to the equation
\[
\frac{z-e^{-i\theta}}{z-e^{i\theta}}\frac{1-e^{i\theta}}{1-e^{-i\theta}}= \frac{w-e^{-i\theta}}{w-e^{i\theta}}\frac{e^{it}-e^{i\theta}}{e^{it}-e^{-i\theta}},
\]
and a special value of $t=t_0=t_0(a)$ given a unique solution in the interval $t\in(-\theta,\theta)$  to the equation
\[
\frac{a-e^{-i\theta}}{a-e^{i\theta}}\frac{1-e^{i\theta}}{1-e^{-i\theta}}= \frac{\Phi(a)-e^{-i\theta}}{\Phi(a)-e^{i\theta}}\frac{e^{it}-e^{i\theta}}{e^{it}-e^{-i\theta}}.
\]
By writing $\mob^{-1}(z,t)$ we mean the inverse with respect to $z$.

Let us remind that if $\varphi \in \mathrm{Hol}(\mathbb{D},\mathbb{D})$ has no interior fixed points, then there exists a boundary Denjoy-Wolff point $\xi_1$ in which $\varphi'(\xi_1)\in (0,1]$. If there is another boundary fixed point $\xi_2$, then $\varphi'(\xi_2) \geq 1$.  It is a simple consequence from Julia-Carath\'eodory-Wolff results that
\begin{equation}\label{12}
\varphi'(\xi_1)\varphi'(\xi_2)\geq 1,
\end{equation}
see also \cite[Theorem 3.1]{CowenPommerenke}. The following theorem is a refinement of \eqref{12}.

\begin{theorem}\label{MainTheorem}
If $\varphi \in \mathrm{Hol}(\mathbb{D},\mathbb{D})$ and univalent, $\varphi(e^{\pm i\theta})=e^{\pm i\theta}$, $\theta\in (0,\frac{\pi}{2})$ then the following sharp estimates hold:
\begin{itemize}
\item[(a)] If $\varphi(0)\in U_1\cup U_2\cup [e^{-i\theta},e^{i\theta}]$ (lies to the right of $\gamma_0$), then
\[
\sqrt{\varphi'(e^{i\theta})\varphi'(e^{-i\theta})}\geq \frac{1-2\Phi(\varphi(0))\cos\theta+\Phi^2(\varphi(0))}{(1- \Phi(\varphi(0))^2};
\]
\item[(b)] If $\varphi(0)\in U_3$ (lies to the left  of $\gamma_0$), then
\[
\sqrt{\varphi'(e^{i\theta})\varphi'(e^{-i\theta})}\geq \frac{1-2\Phi(\varphi(0))\cos\theta+\Phi^2(\varphi(0))}{(1+ \Phi(\varphi(0))^2}.
\]
\item[(c)] If  $\varphi(0)\in\gamma_0$, then $\varphi'(e^{i\theta})\varphi'(e^{-i\theta})\geq 1$, or equivalently, the inequality \eqref{12} can not be refined.
\end{itemize}
In all three cases the estimates are sharp and the extremal value is given by a unique extremal function:
\begin{itemize}
\item[(a)] $\mob^{-1}\left(p^+_{\Phi(\varphi(0))}(z),t_0(\varphi(0))\right)$;
\item[(b)] $\mob^{-1}\left(p^-_{\Phi(\varphi(0))}(z),t_0(\varphi(0))\right)$;
\vspace{5pt}
\item[(c)] $\mob^{-1}\left(z,t_0(\varphi(0))\right)$.
\end{itemize}
\end{theorem}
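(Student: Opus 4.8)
The plan is to move $\varphi(0)$ onto the real axis, recast the inequality as a comparison of reduced moduli of digons, and then pin down the extremal configuration by the quadratic differential / extremal partition machinery of Section~\ref{sec:pre} — in direct analogy with the proof of the Corollary above, but with the single boundary vertex replaced by the pair $e^{\pm i\theta}$. First, put $a=\varphi(0)$ and let $\mu=\mob(\cdot,t_0(a))$ be the automorphism of $\mathbb D$ fixing $e^{\pm i\theta}$; by the definition of $t_0(a)$ it carries $a$ to the real point $x_0:=\Phi(a)\in(-1,1)$. Then $\widetilde\varphi:=\mu\circ\varphi$ is univalent, fixes $e^{\pm i\theta}$, and $\widetilde\varphi(0)=x_0$. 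An automorphism of $\mathbb D$ fixing two distinct boundary points satisfies $\mu'(e^{i\theta})\mu'(e^{-i\theta})=1$ (it is hyperbolic, or the identity), hence $\widetilde\varphi'(e^{i\theta})\widetilde\varphi'(e^{-i\theta})=\varphi'(e^{i\theta})\varphi'(e^{-i\theta})$, so the quantity to be estimated is unchanged. By the geometric meaning of $\Phi$ and $\gamma_0$, the alternatives $\varphi(0)\in U_1\cup U_2\cup[e^{-i\theta},e^{i\theta}]$, $\varphi(0)\in\gamma_0$, $\varphi(0)\in U_3$ become $x_0>0$, $x_0=0$, $x_0<0$ respectively. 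If $x_0=0$ then $\widetilde\varphi(0)=0$ and $\varphi'(e^{i\theta})\varphi'(e^{-i\theta})\ge1$ is exactly \eqref{12}; this is (c), with equality iff $\widetilde\varphi$ is an automorphism fixing $e^{\pm i\theta}$, i.e.\ iff $\varphi=\mob^{-1}(z,t_0(\varphi(0)))$. Henceforth take $x_0\in(0,1)$; the subcase $x_0\in(-1,0)$ runs identically with $p^-,u^-,r^-$ in place of $p^+,u^+,r^+$.

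Next, we may assume $\varphi'(e^{\pm i\theta})<\infty$, so that $\varphi$ is conformal at $e^{\pm i\theta}$ (otherwise the asserted bound is trivial). Recall that $p^+_{x_0}$ is univalent, fixes $e^{\pm i\theta}$, sends $0$ to $x_0$, maps $\mathbb D$ onto $\mathbb D\setminus(-1,r^+]$, and has $(p^+_{x_0})'(e^{\pm i\theta})=\frac{1-2x_0\cos\theta+x_0^2}{(1-x_0)^2}$. Let $D^*$ be the (unique) extremal digon of the family $\mathcal F$ of all digons in $\Omega=\mathbb D\setminus\{x_0\}$ with vertices $e^{\pm i\theta}$ of opening $\pi$ whose connecting arcs wind once around $x_0$, and set $B:=(p^+_{x_0})^{-1}(D^*)\subset\mathbb D$; this is a digon with vertices $e^{\pm i\theta}$ of opening $\pi$ and with $0\in\partial B$ (the preimage of $x_0\in\partial D^*$), so $m(B,e^{i\theta},e^{-i\theta})$ exists, and for every admissible $\widetilde\varphi$ the digon $\widetilde\varphi(B)$ lies in $\mathbb D\setminus\{x_0\}$ and belongs to $\mathcal F$. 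Writing $m(\cdot)$ for $m(\cdot,e^{i\theta},e^{-i\theta})$ and applying the change-of-variable rule \eqref{eq:modchange1} (with angles $\psi=\pi$ at $e^{\pm i\theta}$) to $\widetilde\varphi\colon B\to\widetilde\varphi(B)$ and to $p^+_{x_0}\colon B\to D^*$ and subtracting, the unknown $m(B)$ cancels and we obtain
\[
\log\!\bigl(\widetilde\varphi'(e^{i\theta})\widetilde\varphi'(e^{-i\theta})\bigr)-2\log (p^+_{x_0})'(e^{i\theta})=\pi\bigl(m(\widetilde\varphi(B))-m(D^*)\bigr)\ge 0,
\]
the final inequality being the extremal partition estimate $m(\widetilde\varphi(B))\ge m(D^*)$. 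In view of the value of $(p^+_{x_0})'(e^{\pm i\theta})$ this is precisely (a) (and (b) when $x_0<0$) — provided $D^*$ is the digon cut out by $p^+_{x_0}$.

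The remaining and genuinely substantial step is to verify that claim, i.e.\ that the strip domain $D^*$ of the quadratic differential $Q(z)\,dz^2$ associated with $\mathcal F$ is $p^+_{x_0}(\mathbb D)$ with the appropriate additional critical slit issuing from $x_0$ removed (equivalently, $p^+_{x_0}\colon B\to D^*$ conformally). Here $Q\,dz^2$ lives on $\hat{\mathbb C}$, is real on $\mathbb T$ — thus symmetric under reflection in $\mathbb T$, which carries $x_0\mapsto 1/x_0$ — has double poles at $e^{\pm i\theta}$, simple poles at $x_0$ and $1/x_0$, and hence two simple zeros; these data determine $Q$, and one must read off its critical trajectory structure. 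The most transparent route is the Joukowski substitution $\zeta(z)=z+1/z$, which maps $\mathbb D$ conformally onto $\hat{\mathbb C}\setminus[-2,2]$ and collapses $e^{i\theta}$ and $e^{-i\theta}$ to the single point $2\cos\theta$; under it $\mathcal F$ becomes a \emph{one}-vertex extremal problem on the slit plane with the interior normalization $\infty\mapsto x_0+1/x_0$, whose extremal self-map is a M\"obius transformation — precisely $u^+$ (one checks $u^+$ fixes $2\cos\theta$ and $2$ and sends $\infty$ to $x_0+1/x_0$). Pulling back by $\zeta^{-1}$ gives $p^+_{x_0}(\mathbb D)=\zeta^{-1}\bigl(u^+(\hat{\mathbb C}\setminus[-2,2])\bigr)=\mathbb D\setminus(-1,r^+]$ with the stated $r^+$ and angular derivative. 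The real work is all concentrated here: showing that $u^+$ (equivalently $Q$) is extremal, that the critical graph is the single slit claimed, and that the case division is exactly the one effected by $\gamma_0$ — as $x_0\downarrow 0$ the slit shrinks to the point $-1$ and $p^+_{x_0}\to\mathrm{id}$, matching (c), while for $x_0<0$ the mirror computation produces $\mathbb D\setminus[r^-,1)$ and $p^-_{x_0}$.

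Finally, sharpness: equality in the extremal partition estimate holds only for $\widetilde\varphi(B)=D^*$, hence only for $\widetilde\varphi=p^+_{x_0}$ (resp.\ $p^-_{x_0}$) in the reduced problem; undoing the first reduction, the unique extremal function of the original problem is $\mob^{-1}\!\bigl(p^\pm_{\Phi(\varphi(0))}(z),\,t_0(\varphi(0))\bigr)$, as asserted.
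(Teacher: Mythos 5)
Your proposal follows essentially the same route as the paper: the same M\"obius normalization carrying $\varphi(0)$ to the real point $\Phi(\varphi(0))$ (using that the derivative product at the two fixed boundary points equals $1$), the same extremal partition of $\mathbb{D}\setminus\{x_0\}$ by digons with vertices $e^{\pm i\theta}$ and angles $\pi$, and the same identification of the Pick maps $p^{\pm}_{x_0}$, built through the Joukowski substitution, as the extremal configurations of the associated quadratic differential. The only difference is bookkeeping: you cancel the unknown reduced modulus $m(B)$ by applying the change-of-variable formula to both $\widetilde\varphi$ and $p^{+}_{x_0}$ and subtracting, whereas the paper computes $m(D^{1}_{a_0},e^{i\theta},e^{-i\theta})$ explicitly for both the reference and the extremal domain; the resulting inequality is identical.
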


\begin{corollary}
If $\varphi \in \mathrm{Hol}(\mathbb{D},\mathbb{D})$ and univalent, $\varphi(e^{\pm i\theta})=e^{\pm i\theta}$, $\theta\in (0,\frac{\pi}{2})$ then the following sharp estimates hold:
\begin{itemize}
\item If $\varphi(0)\in U_1$, then
\[
\sqrt{\varphi'(e^{i\theta})\varphi'(e^{-i\theta})}\geq \frac{1-2\re\varphi(0)\cos\theta+(\re\varphi(0))^2}{(1- \re\varphi(0))^2};
\]
\item If $\varphi(0)\in U_3$, then
\[
\sqrt{\varphi'(e^{i\theta})\varphi'(e^{-i\theta})}\geq  \frac{1-2\re\varphi(0)\cos\theta+(\re\varphi(0))^2}{(1+ \re\varphi(0))^2}.
\]
\end{itemize}
Observe that if $\theta=\frac{\pi}{2}$, then the domain $U_2$ degenerates. The equality sign is attained for real values of $\varphi(0)$ and for $p^+_{\varphi(0)}(z)$ and $p^-_{\varphi(0)}(z)$ respectively.
\end{corollary}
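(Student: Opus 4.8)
The plan is to deduce the Corollary from Theorem~\ref{MainTheorem}(a) and (b) by replacing $\Phi(\varphi(0))$ with $\re\varphi(0)$ in the respective bounds. Set
\[
f(t)=\frac{1-2t\cos\theta+t^2}{(1-t)^2},\qquad g(t)=\frac{1-2t\cos\theta+t^2}{(1+t)^2},\qquad t\in(-1,1),
\]
so that the right-hand sides of Theorem~\ref{MainTheorem}(a),(b) are exactly $f(\Phi(\varphi(0)))$ and $g(\Phi(\varphi(0)))$, while the bounds to be proved are $f(\re\varphi(0))$ and $g(\re\varphi(0))$. It therefore suffices to establish two elementary facts: (i) $f$ is strictly increasing and $g$ strictly decreasing on $(-1,1)$; (ii) $\Phi(\varphi(0))\ge\re\varphi(0)$ when $\varphi(0)\in U_1$, and $\Phi(\varphi(0))\le\re\varphi(0)$ when $\varphi(0)\in U_3$.

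For (i) I would simply differentiate: a short computation gives $f'(t)=2(1+t)(1-\cos\theta)(1-t)^{-3}$ and $g'(t)=2(t-1)(1+\cos\theta)(1+t)^{-3}$, which are respectively positive and negative for all $t\in(-1,1)$ and all $\theta\in(0,\tfrac{\pi}{2}]$; in particular this is what gives sense to the $\theta=\tfrac{\pi}{2}$ remark, in which $U_2$ collapses onto the chord $[e^{-i\theta},e^{i\theta}]$.

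For (ii), write $a=\varphi(0)$ and let $c=\dfrac{1-|a|^2}{2(\cos\theta-\re a)}$ be the abscissa of the centre of the circle through $a$, $\bar a$ and $e^{\pm i\theta}$, so that, by the definition of $\Phi$, $\Phi(a)=c+\sqrt{c^2-2c\cos\theta+1}$ when $\re a>\cos\theta$ (then $c<0$) and $\Phi(a)=c-\sqrt{c^2-2c\cos\theta+1}$ when $\re a<\cos\theta$ (then $c>0$). From the geometric descriptions preceding Theorem~\ref{MainTheorem} one has $U_1\subset\{z\in\mathbb D:\re z>\cos\theta\}$ and $U_3\subset\{z\in\mathbb D:\re z<\cos\theta\}$. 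On $U_1$ we have $\re a-c>0$, so $\Phi(a)\ge\re a$ is, after squaring and using $2c(\cos\theta-\re a)=1-|a|^2$, equivalent to $1-(\re a)^2\ge 1-|a|^2$, i.e. to $(\im a)^2\ge0$; on $U_3$ the inequality $\Phi(a)\le\re a$ is trivial if $c\le\re a$, and if $c>\re a$ it again reduces after squaring to $(\im a)^2\ge0$. In either region equality holds precisely for $a\in\mathbb R$, i.e. $\Phi(a)=\re a$ there.

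Combining (i) and (ii): for $\varphi(0)\in U_1$, $\re\varphi(0)\le\Phi(\varphi(0))$ and monotonicity of $f$ give $\sqrt{\varphi'(e^{i\theta})\varphi'(e^{-i\theta})}\ge f(\Phi(\varphi(0)))\ge f(\re\varphi(0))$, which is the first inequality; for $\varphi(0)\in U_3$, $\Phi(\varphi(0))\le\re\varphi(0)$ together with $g$ decreasing give $\sqrt{\varphi'(e^{i\theta})\varphi'(e^{-i\theta})}\ge g(\Phi(\varphi(0)))\ge g(\re\varphi(0))$. Sharpness is inherited from Theorem~\ref{MainTheorem}: for $a=\varphi(0)\in\mathbb R$ one has $\Phi(a)=a$, the equation defining $t_0(a)$ forces $t_0(a)=0$, and $\mob(z,0)=z$, so the extremal maps of Theorem~\ref{MainTheorem}(a),(b) reduce to $p^+_a$ and $p^-_a$, for which $\sqrt{\varphi'(e^{i\theta})\varphi'(e^{-i\theta})}$ equals $(p^+_a)'(e^{\pm i\theta})=f(a)$, resp. $(p^-_a)'(e^{\pm i\theta})=g(a)$. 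I do not expect a real obstacle here, the whole content being a one-variable monotonicity plus a one-line geometric comparison; the single point to watch is that the two opposite monotonicities of $f$ and $g$ must line up with the two opposite directions of the comparison $\Phi(\varphi(0))$ versus $\re\varphi(0)$ on $U_1$ and on $U_3$ — which is also precisely why $U_2$ is absent from the statement, since there $\re\varphi(0)<\cos\theta$ forces $\Phi(\varphi(0))\le\re\varphi(0)$, so $f(\Phi(\varphi(0)))\le f(\re\varphi(0))$, and the substitution would produce a bound strictly larger than the one supplied by Theorem~\ref{MainTheorem}(a).
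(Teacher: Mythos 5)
Your argument is correct and follows the same route as the paper: the authors likewise derive the Corollary from Theorem \ref{MainTheorem} by replacing $\Phi(\varphi(0))$ with $\re\varphi(0)$, using $\Phi(\varphi(0))\ge\re\varphi(0)$ on $U_1$ and $\Phi(\varphi(0))\le\re\varphi(0)$ on $U_3$ together with the (in the paper implicit) monotonicity of the two bounding functions. You merely make explicit the two details the paper omits — the computation of $f'$ and $g'$ and the algebraic verification that the comparison of $\Phi(a)$ with $\re a$ reduces to $(\im a)^2\ge 0$ — so no substantive difference.
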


\begin{proof}

Let us assume first that $\theta\in(0,\frac{\pi}{2})$, and let $\varphi\in \Hol(\mathbb D,\mathbb D)$ fixes two points $\xi_1=e^{-i\theta}$ and  $\xi_2=e^{i\theta}$, and let finally
$\varphi(0)\in U_1\cup U_2\cup [e^{-i\theta},e^{i\theta}]$, see~Fig~1.

Let us consider the family $\mathcal F^{1}_{a_0}$ of digons $D$ in $\Omega=\mathbb D\setminus \{a_0\}$, $0<a_0<1$ with two vertices $e^{\pm i\theta}$  and with the equal angles $\pi$ at these vertices,  such that
any arc connecting $e^{i\theta}$ and $e^{-i\theta}$ in $D$ is homotopic in $\Omega$ to the arc $\{e^{it}\colon t\in(-\theta,\theta)\}$.\

Then
\[
\min\limits_{D\in \mathcal F^{1}_{a_0}} m(D,e^{i\theta},e^{-i\theta})= m(D^{1}_{a_0},e^{i\theta},e^{-i\theta}),
\]
where $D_{a_0}^{1}$ is a strip domain in the trajectory structure of the quadratic differential
\[
Q(w)dw^2=\frac{A^2(w+1)^2}{(w-e^{i\theta})^2(w-e^{-i\theta})^2(w-a_0)(1-{a_0}w)}dw^2,\quad w\in \Omega,
\]
for some real value of $A$.

Using elementary conformal maps and known reduced moduli of
digons, see \cite[page 33]{VasBook}, we calculate the modulus of $D_{a_0}^1$ as
\[
m(D^{1}_{a_0},e^{i\theta},e^{-i\theta})=\frac{2}{\pi}\log\frac{4(1-\cos\theta)}{\sin\theta}\frac{1-2a_0\cos\theta+a_0^2}{(1-a_0)^2}.
\]

The digon $D^1_0$ is mapped by the function $\varphi$, satisfying the above properties, onto the digon $\varphi(D_0^1)$ with the same vertices and angles at them, and
$\varphi(0)\in U_1$. Unfortunately, it is not possible to apply symmetrization to this digon because the resulting object will not be a digon with the same vertices. Therefore, we
apply another procedure.

Let us consider the following M{\"o}bius map $w=\mob(z,t)$
\[
\frac{z-e^{-i\theta}}{z-e^{i\theta}}\frac{1-e^{i\theta}}{1-e^{-i\theta}}= \frac{w-e^{-i\theta}}{w-e^{i\theta}}\frac{e^{it}-e^{i\theta}}{e^{it}-e^{-i\theta}},
\]
which makes the correspondence $1\to e^{it}$, $e^{\pm i\theta}\to e^{\pm i\theta}$. Fix a point $a\in\mathbb D$ and consider the curve $\gamma\colon (-\theta,\theta)\to \mathbb D$
passing through $a=\gamma(0)$ and defined by the equation
\[
\frac{a-e^{-i\theta}}{a-e^{i\theta}}\frac{1-e^{i\theta}}{1-e^{-i\theta}}= \frac{\gamma(t)-e^{-i\theta}}{\gamma(t)-e^{i\theta}}\frac{e^{it}-e^{i\theta}}{e^{it}-e^{-i\theta}}.
\]
We have $\lim_{t\to -\theta+0}\gamma(t)=e^{-i\theta}$, and $\lim_{t\to \theta-0}\gamma(t)=e^{i\theta}$
Observe that $\gamma$ is an arc of a circle centered on the point
\[
\left(\frac{1-|a|^2}{2(\cos\theta-\re a)},0\right),
\]
and of radius
\[
\sqrt{1+\left(\frac{1-|a|^2}{2(\cos\theta-\re a)}\right)^2-\frac{1-|a|^2}{\cos\theta-\re a}\cos\theta}.
\]
If $\re a=\cos\theta$, then the arc $\gamma$ becomes the interval $[e^{i\theta},e^{-i\theta}]$.
The arc $\gamma$ intersects the real axis inside the unit disk at the point
$
\gamma(t_0)=\Phi(a),
$
and we denote $t_0=\gamma^{-1}(\Phi(a))$. We remark that if $a$ is real, then $\Phi(a)=a$. If $a\in U_1\cup U_2\cup [e^{-i\theta},e^{i\theta}]$, then $\Phi(a)\in(0,1)$. If $a\in U_3$, then $\Phi(a)\in(-1,0)$. If $a\in \gamma_0$, then $\Phi(a)=0$.

The angular derivatives are
\[
w'_z(e^{-i\theta},t_0)=\frac{\sin\frac{\theta-t_0}{2}}{\sin\frac{\theta+t_0}{2}}, \quad w'_z(e^{i\theta},t_0)=\frac{\sin\frac{\theta+t_0}{2}}{\sin\frac{\theta-t_0}{2}}.
\]

Let $t_0$ be defined as $t_0=\gamma^{-1}(\Phi(\varphi(0)))$. Observe that $\Phi(\varphi(0))\in (0,1)$. The M\"obius transformation $\mob(z,t_0)$ maps the digon $\varphi(D_0^1)$
onto the digon $\mob(\varphi(D_0^1),t_0)$, which is admissible in the problem for the family of digons $\mathcal F_{\Phi(\varphi(0))}^1$.  Due to admissibility we can write
\[
m(D^{1}_{0},e^{i\theta},e^{-i\theta})+\frac{1}{\pi}\log \varphi'(e^{i\theta})\varphi'(e^{-i\theta})+\frac{1}{\pi}\log w'_z(e^{i\theta},t_0)w'_z(e^{-i\theta},t_0)\geq m(D^{1}_{\Phi(\varphi(0))},e^{i\theta},e^{-i\theta}).
\]
Then we arrive at following inequality (a) of Theorem 1
\begin{equation}\label{main1}
\sqrt{\varphi'(e^{i\theta})\varphi'(e^{-i\theta})}\geq \frac{1-2\Phi(\varphi(0))\cos\theta+\Phi^2(\varphi(0))}{(1- \Phi(\varphi(0))^2}
\end{equation}

Now let $\varphi(0)\in U_3$, see~Fig~1. Let us consider the family $\mathcal F^{2}_{a_0}$ of digons $D$ in $\Omega=\mathbb D\setminus \{a_0\}$, $-1<a_0<0$ with two vertices $e^{\pm i\theta}$  and with the equal angles $\pi$ at these vertices,  such that
any arc connecting $e^{i\theta}$ and $e^{-i\theta}$ in $D$ is homotopic in $\Omega$ to the arc $\{e^{it}\colon t\in(\theta,2\pi-\theta)\}$.

Then
\[
\min\limits_{D\in \mathcal F^{2}_{a_0}} m(D,e^{i\theta},e^{-i\theta})= m(D^{2}_{a_0},e^{i\theta},e^{-i\theta}),
\]
where $D_{a_0}^{2}$ is a strip domain in the trajectory structure of the quadratic differential
\[
Q(w)dw^2=\frac{A^2(w-1)^2}{(w-e^{i\theta})^2(w-e^{-i\theta})^2(w-a_0)(1-{a_0}w)}dw^2,\quad w\in \Omega,
\]
for some real value of $A$.

Using elementary conformal maps and known reduced moduli of
digons, see \cite[page 33]{VasBook}, we calculate the modulus of $D_{a_0}^2$ as
\[
m(D^{2}_{a_0},e^{i\theta},e^{-i\theta})=\frac{2}{\pi}\log\frac{4(1-\cos\theta)}{\sin\theta}\frac{1-2a_0\cos\theta+a_0^2}{(1+a_0)^2}.
\]

Let $t_0$ be again defined as $t_0=\gamma^{-1}(\Phi(\varphi(0)))$. Observe that $\Phi(\varphi(0))\in (-1,0)$. The M\"obius transformation $\mob(z,t_0)$ maps the digon $\varphi(D_0^2)$
onto the digon $\mob(\varphi(D_0^2),t_0)$, which is admissible in the problem for the family of digons $\mathcal F_{\Phi(\varphi(0))}^2$.  Due to admissibility we can write
\[
m(D^{2}_{0},e^{i\theta},e^{-i\theta})+\frac{1}{\pi}\log \varphi'(e^{i\theta})\varphi'(e^{-i\theta})+\frac{1}{\pi}\log w'_z(e^{i\theta},t_0)w'_z(e^{-i\theta},t_0)\geq m(D^{2}_{\Phi(\varphi(0))},e^{i\theta},e^{-i\theta}).
\]
Then we arrive at following inequality (b) of Theorem 1
\begin{equation}\label{main2}
\sqrt{\varphi'(e^{i\theta})\varphi'(e^{-i\theta})}\geq \frac{1-2\Phi(\varphi(0))\cos\theta+\Phi^2(\varphi(0))}{(1+ \Phi(\varphi(0))^2}
\end{equation}

If $\varphi(0)\in \gamma_0$, then $\Phi(\varphi(0))=0$, and the sharp inequality $$\varphi(e^{i\theta})\varphi(e^{-i\theta})\geq 1$$ holds
with the equality for the M{\"o}bius map $\mob(z,t_0)$, where $t_0$ is defined by the formula
\[
e^{it_0}=\frac{1-\varphi(0)}{1+(1-2\cos\theta)\varphi(0)}.
\]
Observe that $\varphi(0)\in\gamma_0$, hence the absolute value of the right-hand side of the above equation is one.

In order to prove the Corollary we consider
the domains $U_1$, $U_2$, and $U_3$, as well as the arc $\gamma_0$, which are defined in Introduction.
If $\varphi(0)\in U_1$, then $\Phi(\varphi(0))>\re \varphi(0)>\cos\theta$ and the inequality \eqref{main1} may be strengthened as
\[
\sqrt{\varphi'(e^{i\theta})\varphi'(e^{-i\theta})}\geq \frac{1-2\re\varphi(0)\cos\theta+(\re\varphi(0))^2}{(1- \re\varphi(0))^2}.
\]
If $\varphi(0)\in U_3$, then $\Phi(\varphi(0))<\re \varphi(0)<0$ and
the inequality \eqref{main2} may be strengthened as
\[
\sqrt{\varphi'(e^{i\theta})\varphi'(e^{-i\theta})}\geq  \frac{1-2\re\varphi(0)\cos\theta+(\re\varphi(0))^2}{(1+ \re\varphi(0))^2}.
\]
In both inequalities the equality sign is attained only if $\varphi(0)$ is real.

In order to construct the extremal functions we observe that the function $p^{\pm}_x$ satisfies the equation
\[
\frac{(w\pm 1)^2}{(w-e^{i\theta})^2(w-e^{-i\theta})^2(w-x)(1-xw)}dw^2=\frac{(z\pm 1)^2}{(z-e^{i\theta})^2(z-e^{-i\theta})^2z}dz^2
\]
in $D_0^1$ or $D_0^2$ respectively, i.e., map the extremal configuration for $\mathcal F^{1,2}_0$ onto the extremal configuration for $\mathcal F^{1,2}_x$.
Further application of corresponding M\"obius transforms for $x=\Phi(\varphi(0))$ finishes the proof.
\end{proof}

\end{document}